\documentclass[final]{siamltex}

\usepackage[usenames,dvipsnames]{xcolor}
\usepackage[disable]{todonotes}

\usepackage{amsmath} 
\usepackage{amssymb}  % assumes amsmath package installed
\usepackage{graphics}
\usepackage{epsfig}
\usepackage{framed}
\linespread{1.5}
\usepackage{hyperref}
\usepackage{cite}

\newtheorem{theo}{Theorem}[section]  % use one numbering scheme for all these objects

\newtheorem{rema}[theo]{Remark}

\newtheorem{assumption}[theo]{Assumption}
\newtheorem{example}[theo]{Example}

\newcommand{\beq}{\begin{equation}}
\newcommand{\eeq}{\end{equation}}
\newcommand{\beqa}{\begin{eqnarray}}
\newcommand{\eeqa}{\end{eqnarray}}
\newcommand{\beqs}{\begin{equation*}}
\newcommand{\eeqs}{\end{equation*}}
\newcommand{\beqas}{\begin{eqnarray*}}
\newcommand{\eeqas}{\end{eqnarray*}}

\newcommand{\X}{\mathcal{X}}

\newcommand{\R}{\mathbb R}
\newcommand{\N}{\mathbb N}

\newcommand{\bigO}{{\cal O}}

 %gradient error constant

\newcommand{\hesslb}{c}

\newcommand{\dist}{\mbox{dist}}

\newcommand{\Idn}{{I_n}}
%\def\iu{\imagunit}
%\def\e{{\rm e}}
% class of functions:
\newcommand{\Cmnsub}{\mathcal{C}_{c,L}}
\newcommand{\Cmnsubinf}{\mathcal{C}_{c,L}}

\newcommand\Mtilde{\stackrel{\sim}{\smash{{M}}\rule{0pt}{1.1ex}}}
\newcommand{\hM}{\Mtilde}
\newcommand{\hf}{{\hat{f}}}
\definecolor{hlitecolor}{rgb}{0.15, 0.23, 0.89}

\title{Convergence Rate of Incremental Gradient and Incremental Newton Methods}

%\author{M. G\"urb\"uzbalaban\thanks{Labarotory for Decision and Information Systems, Massachusetts Institute of Technology, 32 Vassar St, Cambridge, MA, USA. (mertg@mit.edu)} \and A. Ozdaglar \and }
\author{
M.~G\"urb\"uzbalaban\thanks{Laboratory for Information and Decision Systems, Massachusetts Institute of Technology, Cambridge, MA, 02139, USA. email: \{mertg, asuman,parrilo\}@mit.edu.}
\and A.~Ozdaglar$^*$
\and P.~Parrilo$^*$
}
\begin{document} % \centerline{\Large Fast Approximation of $H_\infty$ Norm}
\maketitle
\begin{abstract} The incremental gradient method is a prominent algorithm for minimizing a finite sum of smooth convex functions, used in many contexts including large-scale data processing applications and distributed optimization over networks. It is a first-order method that processes the functions one at a time based on their gradient information. The incremental Newton method, on the other hand, is a second-order variant which exploits additionally the curvature information of the underlying functions and can therefore be faster. In this paper, we focus on the case when the objective function is strongly convex and present fast convergence results for the incremental gradient and incremental Newton methods under the constant and diminishing stepsizes. For a decaying stepsize rule $\alpha_k = \Theta(1/k^s)$ with $s \in (0,1]$, we show that the distance of the IG iterates to the optimal solution converges at rate $\bigO(1/k^{s})$ (which translates into $\bigO(1/k^{2s})$ rate in the suboptimality of the objective value). For $s>1/2$, this improves the previous $\bigO(1/\sqrt{k})$ results in distances obtained for the case when functions are non-smooth. We show that to achieve the fastest $\bigO(1/k)$ rate, incremental gradient needs a stepsize that requires tuning to the strong convexity parameter whereas the incremental Newton method does not. The results are based on viewing the incremental gradient method as a gradient descent method with gradient errors, devising efficient upper bounds for the gradient error to derive inequalities that relate distances of the consecutive iterates to the optimal solution and finally applying Chung's lemmas from the stochastic approximation literature to these inequalities to determine their asymptotic behavior. In addition, we construct examples to show tightness of our rate results. 
\end{abstract}

\section{Introduction}

We consider the following additive cost optimization problem
\beq\label{pbm-multi-agent}
\min \sum_{i=1}^m f_i(x) \quad \mbox{s.t.} \quad x \in \R^n
\eeq
where the objective function is the sum of a large number of convex {\it component functions} $f_i: \R^n\to \mathbb{R}$. Such problems arise in a number of settings including distributed optimization across 
$m$ agents, where the component function $f_i$ corresponds to the local objective function of agent $i$ \cite{Boyd2011AdmmBook,NedicOzdaglar2009,Nedic2007rate,RamNedicVeer2007}, and statistical estimation problems where each $f_i$ represents the loss function associated with one of the data blocks \cite{bertsekas2011incremental,BottouLecun2005,Dickenstein2014,Jordan13DistLearningApi}. Our goal is to exploit the additive structure of problem (\ref{pbm-multi-agent}) and solve it using incremental methods which involve sequential processing of component functions.

We first consider the {\it incremental gradient (IG) method} for solving problem (\ref{pbm-multi-agent}). IG method is similar to the standard gradient method with the key difference that at each iteration, the decision vector is updated incrementally by taking sequential steps  along the gradient of  the component functions $f_i$ in a cyclic order. Hence, we can view each outer iteration $k$ as a cycle of $m$ \emph{inner iterations}: starting from initial point $x_1^1 \in \R^n$, for each $k\ge1$, we update the iterate $x_i^k$ as
 \begin{equation}\label{eq-inner-update} x_{i+1}^k :={x_i^k - \alpha_k \nabla f_i (x_i^k)} , \qquad i=1,2,\dots,m,
   \end{equation}
where $\alpha_k>0$ is a stepsize. We set $x_1^{k+1} = x_{m+1}^k$ and refer to $\{x_1^k\}$ as the {\it outer iterates}. 
  When the component functions are not smooth, we can replace gradients with a subgradient and the corresponding method is called the incremental subgradient method. 
  Using the update relation (\ref{eq-inner-update}), for each $k\ge 1$, we can write down the relation between the outer iterates as
 \begin{equation}\label{eq-outer-update}
  x_1^{k+1} = x_1^k - \alpha_k \sum_{i=1}^m \nabla f_i (x_i^k),
  \end{equation}  
 where  $\sum_{i=1}^m \nabla f_i (x_i^k)$ is the aggregated component gradients and serve as an approximation to the full gradient $\nabla f(x_1^k)$ with the difference that it is evaluated at different inner iterates. 
  
IG is a prominent algorithm with a long history that has appeared in many contexts. In the artificial intelligence literature, it has been used in training neural networks in the 80s and is known as the online backpropagation algorithm \cite{widrowh60,Luo1991convergence,bertsekas2011incremental}. Another well-known example of this method is the Kaczmarz method for solving systems of linear equations, which is a special case of the IG method \cite{Bertsekas15Book}.

Due to the simplicity and long history of IG method, its global convergence has been supported under various conditions (see \cite{bertsekas2011incremental} for a survey), however characterizing its convergence rate has been the subject of more recent work . Among the papers relevant to our work, Kohonen \cite{Kohonen74} focused on quadratic component functions with constant stepsize, $\alpha_k=\alpha>0$ for all $k$, and showed that the iterates may converge to a limit cycle (subsequence of inner iterates converge to different limits close to optimal). The papers \cite{Luo1991convergence,Gaivoronski94, Grippo94,ManSol98IncGradMomentum, TsengIncrGradient98, BertTsit2000Gradient, TsengLuo2008, Bertsekas1996incremental,bertsekas1997hybrid} focused on diminishing stepsize and showed convergence of the algorithm and its variants under different assumptions. The papers \cite{Solodov98IncrGrad} and \cite{NedicBert2001IncSubgrad} studied IG with a constant stepsize and under different assumptions on the component functions, and showed that the iterates converge to a neighborhood of the optimal solution (where the size of the neighborhood is a function of the stepsize). Most closely related to our paper is a convergence rate result provided by N\'edic and Bertsekas\cite{NedicBert2001IncSubgrad}, which under a strong-convexity type condition on the sum function $f(x) = \sum_{i=1}^m f_i(x)$, but without assuming differentiability of the component functions, shows that the distance of the iterates generated by the incremental subgradient method converges at rate $\bigO({1\over \sqrt{k}})$ to the optimal solution with a  properly selected diminishing stepsize. 

Luo \cite{Luo1991convergence} considered a special case of the problem \eqref{pbm-multi-agent} in dimension one when there are two convex quadratic component functions with an identical non-zero curvature and showed that IG iterates converge in this particular case at rate $\bigO({1\over k})$ to the optimal solution. Motivated by this example, in this paper we show that N\'edic and Bertsekas' $\bigO({1\over \sqrt{k}})$ result can be improved when the component functions are smooth. In particular, when the component functions are quadratics and the sum function $f(x)$ is strongly convex, we first prove that the distances of the  iterates generated by the IG method converge at rate $\bigO({1\over k})$  (which translates into $\bigO({1\over k^2})$ in function values by the strong convexity of $f$). Then, we generalize this result to twice continuously differentiable component functions under some assumptions. Achieving this rate with IG requires using a diminishing stepsize that adapts to the strong convexity constant $c$ of the sum function, i.e., a stepsize that takes the form $R/k$ where $R>1/c$.\footnote{We note that a consequence of a paper by Hazan and Kale \cite{Hazan:2007bq} is that when each of the component functions is strongly convex, IG with iterate averaging and stepsize $\alpha_k = R/k$ where $R$ is the multiplicative inverse of the strong convexity constant, converges at rate $\bigO(\log k/k)$ in the suboptimality of the function value. However, the rate we obtain in this paper with a similar stepsize corresponds to $\bigO(1/k^{2})$ in the suboptimality of the objective value which is much faster.} We then consider alternative ``robust" stepsizes $\alpha_k = \Theta({1\over k^s})$ for $s\in (0,1)$, which does not require knowledge of the strong convexity constant,  and show that IG method with these stepsizes achieve a rate $\bigO({1\over k^s})$ in distances (which translates into $\bigO({1\over k^{2s}})$ in function values). We also provide lower bounds showing that these rates cannot be improved using IG. Furthermore, our results play a key role in the recently obtained convergence results for the \textit{random reshuffling} (RR) method \cite{GurRand15RR}. The random reshuffling method is a stochastic variant of IG where the order to visit the functions is selected as a random permutation of $\{1,2,\dots,m\}$ at the beginning of each cycle instead of the deterministic fixed order $\{1,2,\dots,m\}$ of IG (hence the name RR refers to the random reshuffling of the order). The convergence rate of the random reshuffling method has been a long standing open question. Fundamental to the analysis in \cite{GurRand15RR} which solves this open problem under some technical assumptions on the objective function $f$ is the fast convergence results introduced in this paper.

While IG achieves rate $\bigO({1\over k})$ in distances with stepsize ${R\over k}$, our rate estimates show dependence on strong convexity and Lipschitz constant of the sum function. We next consider an {\it incremental Newton (IN) method} introduced in \cite{GurOzPa15} for solving problem (\ref{pbm-multi-agent}) which scales the gradients of the component functions with an estimate of the Hessian of the sum function $f(x)$: starting from initial point $x_1^1\in\R^n$, initial stepsize $\alpha_1 > 0$ and initial Hessian estimate $H_0^1 = \Idn$, for each $k\geq 1$, IN method updates the iterate $x_i^k$ as 
	\begin{equation} x_{i+1}^k := x_i^k - \alpha_k (\bar{H}_i^k)^{-1} \nabla f_i (x_i^k) 
		\label{eq-inner-update-2}     
    \end{equation}
 where
       \begin{equation} H_i^k := H_{i-1}^k + \nabla^2 f_i(x_i^k), \quad  \bar{H}_i^k = H_i^k/k, 
       		\label{eq-hessian-update}
       \end{equation}
with the convention that $x_1^{k+1} = x_{m+1}^k$ and $H_0^{k+1}=H_m^k$. For IN, we provide rate estimates which do not depend on the Lipschitz constant and show that the IN method, unlike IG, converges with rate $\bigO({1\over k})$ without the necessity to adjust to the strong convexity constant.

%The convergence rate also depends on the condition number $\kappa$ as it is a first-order method. This is similar for the incremental gradient (IG) method, which is a deterministic counterpart of the stochastic gradient method.  %A recent paper of Bottou et al. argues that for any first-order incremental method you need at least $O(\sqrt{\kappa} / \varepsilon)$ calls to reach out to $\varepsilon$ accuracy. %For the incremental gradient, we would expect to have at least $O(\kappa/\varepsilon)$ calls to reach out to $\varepsilon$ accuracy. 

%We will also compare the rate of incremental Newton and incremental gradient methods in terms of convergence rate. 
\paragraph{\textbf{Notation}} For non-negative sequences $\{a_k\}$ and $\{b_k\}$, we write $a_k \geq \Omega(b_k)$ if there exists a real constant $h>0$ and a finite integer $k_0$ such that $a_k \geq h b_k$ for every $k\geq k_0$. The norm $\| \cdot\|$ denotes Euclidean norm for vectors and the spectral norm for matrices. We also write $a_k \geq \tilde{\Omega}(b_k)$ if there exists a real constant $h>0$ and infinitely many $k$ such that $a_k \geq h b_k$ is true\footnote{The $\tilde{\Omega}$ function defined here was introduced by Littlewood and Hardy in 1914. It is a weaker alternative to the $\Omega$ function and satisfies $a_k \geq \Omega(b_k) \implies a_k \geq \tilde{\Omega}(b_k)$ but not vice versa.}. The matrix $\Idn$ denotes the $n\times n$ identity. The sets $\R_+$ and $\N_+$ denotes the positive real numbers and positive integers respectively. We refer to twice continuously differentiable functions on $\R^n$ as \textit{smooth} functions. 

%\section{Rate of IG method}
\section{Preliminaries}
We introduce the following lemma, known as Chung's lemma, which we will make use of in our rate analysis. The proof of the part $(i)$ of this lemma can be found in \cite[Sec 2.2]{Pol87}. For the proof of the part $(ii)$ of this lemma, we refer the reader to \cite[Lemma 4]{Chung:1954iy}.
\begin{lemma} \label{lem-chung-1} Let $u_k\geq 0$ be a sequence of real numbers. Assume there exists $k_0$ such that 
$$ u_{k+1} \leq (1-\frac{a}{k^s}) u_k + \frac{d}{k^{s+t}}, \quad \forall k\geq k_0,$$
where $0<s\leq 1$, $d>0$, $a>0$ and $t>0$ are given real numbers. Then, 
\begin{itemize}
	\item [$(i)$] If $s=1$, then
	      \begin{alignat*}{3}
   &\limsup_{k\to\infty} k^t u_k &~\leq~& \frac{d}{a-t} \quad  \quad  &\mbox{for}&\quad a>t,\\
	&\limsup_{k\to\infty} \frac{k^{a}}{\log k} u_k &~<~& \infty \quad &\mbox{for}& \quad a=t,\\
	&\limsup_{k\to\infty} k^a u_k &~<~& \infty \quad &\mbox{for}&\quad a<t.
\end{alignat*}
    \item [$(ii)$] If $0<s<1$, then\footnote{Part $(ii)$ of Lemma \ref{lem-chung-1} is still correct when $u_k$ is allowed to take negative values. However, this will not be needed in our analysis.} 
         $$ \limsup_{k \to \infty} k^{t} u_k \leq \frac{d}{a}.$$
\end{itemize}
\end{lemma}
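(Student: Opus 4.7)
The plan is to prove each case by an inductive ansatz of the form $u_k \leq C/k^\gamma$ (with a logarithmic correction in one borderline case), choosing the exponent $\gamma$ to match the asymptotic rate claimed and tuning the constant $C$ so that the contraction $(1 - a/k^s)$ overpowers the driving term $d/k^{s+t}$ when both are compared against $1/(k+1)^\gamma$. For each case, the induction step reduces to a short Taylor expansion in $1/k$.

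For part $(ii)$, with $0 < s < 1$, I would take $u_k \leq C/k^t$ with $C$ slightly larger than $d/a$, for $k \geq k_0$. Substituting into the recursion yields
\beqs
u_{k+1} \leq \frac{C}{k^t} - \frac{aC - d}{k^{s+t}},
\eeqs
so closing the induction requires $C/k^t - C/(k+1)^t \leq (aC-d)/k^{s+t}$. The left side is of order $1/k^{t+1}$, while the right side is of order $1/k^{s+t}$ with strictly smaller exponent $s+t < t+1$; hence the right side dominates and the induction closes for any $C > d/a$ once $k$ is large enough. Letting $C \downarrow d/a$ gives the stated $\limsup$ bound.

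For part $(i)$, where $s=1$, the terms $C/k^t - C/(k+1)^t \sim tC/k^{t+1}$ and the driving term $d/k^{t+1}$ now live at the \emph{same} order in $1/k$, so constants must be matched carefully. When $a > t$, the polynomial ansatz $u_k \leq C/k^t$ still works and the induction reduces at leading order to $tC \leq aC - d$, giving $C = d/(a-t) + \epsilon$ as admissible. In the critical case $a = t$, the polynomial ansatz fails by exactly a logarithmic factor, so I would replace it with $u_k \leq C \log k / k^t$; the extra $C/k^{t+1}$ produced by $\log(k+1) - \log k \sim 1/k$ is precisely what is needed for the induction to close for $C$ large enough, yielding a $\log k / k^t$ bound. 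When $a < t$, the driving term is negligible compared with the natural decay $1/k^a$ of the homogeneous recursion, and the cleanest path is to set $w_k = k^a u_k$ and exploit
\beqs
(1+1/k)^a(1-a/k) = 1 - \frac{a(a+1)}{2k^2} + \bigO(1/k^3),
\eeqs
together with $(k+1)^a \leq 2^a k^a$, to obtain $w_{k+1} \leq (1 + \bigO(1/k^2)) w_k + \bigO(1/k^{1+t-a})$. Since $a < t$, both $\sum 1/k^2$ and $\sum 1/k^{1+t-a}$ converge, so the standard fact that $w_{k+1} \leq (1+\alpha_k)w_k + \beta_k$ with $\sum \alpha_k, \sum \beta_k < \infty$ forces $\{w_k\}$ to stay bounded delivers $\limsup k^a u_k < \infty$.

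The main obstacle is part $(i)$, where the contraction and driving terms live at the same order $1/k^{t+1}$ and one must track constants precisely through the three subcases. The transition at $a = t$ is where the rate picks up its $\log k$, and this is exactly the point at which a naive polynomial ansatz breaks and must be augmented. Part $(ii)$ is substantially easier because the driving term $1/k^{s+t}$ strictly dominates the natural increment $1/k^{t+1}$ of the ansatz, so the two constants do not need to match, only their signs.
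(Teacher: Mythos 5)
The paper does not prove this lemma; it cites Polyak (1987, Section 2.2) for part $(i)$ and Chung (1954, Lemma 4) for part $(ii)$, so there is no in-paper argument to compare against. Your strategy---an inductive ansatz $u_k \le C/k^\gamma$ (or $C\log k/k^\gamma$), with the induction step verified by a Taylor expansion in $1/k$---is the classical one, and your induction-step computations all check out, as does the summable-perturbation argument for $w_k=k^a u_k$ in the $a<t$ subcase.

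The gap is the base case. To run the induction from some $k_1$ past the threshold where the step closes, you need $u_{k_1}\le C/k_1^t$, and ``once $k$ is large enough'' does not supply this: as $C$ decreases toward the target constant, the threshold $k_1(C)$ at which the step starts to work grows, while $u_{k_1(C)}$ is an uncontrolled number. In the two subcases whose conclusion is only a finite $\limsup$ ($a=t$ and $a<t$) you may simply inflate $C$ to cover any finite starting value, so there is no issue; but part $(ii)$ and the subcase $a>t$ of part $(i)$ claim the sharp constants $d/a$ and $d/(a-t)$, and there the base case must be earned. The fix is short: pass to $v_k=k^t u_k$ and absorb the Taylor remainders into slightly perturbed constants, so that $v_{k+1}\le(1-a'/k^s)v_k+d'/k^s$ with $a'\uparrow a$ when $0<s<1$ (respectively $v_{k+1}\le(1-a'/k)v_k+d'/k$ with $a'\uparrow a-t$ when $s=1$, $a>t$), and observe that whenever $v_k> d'/a'+\eta$ this yields $v_{k+1}-v_k\le-\eta\,a'/k^{\min(s,1)}$. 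Since $\sum_k 1/k^{\min(s,1)}=\infty$ and $v_k\ge 0$, the sequence cannot remain above $d'/a'+\eta$ indefinitely; this supplies the missing base case, your induction step then keeps $v_k$ below that level, and sending $\eta\downarrow 0$ and the perturbed constants to their limits gives the sharp bounds. With that observation inserted, your proof is complete.
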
 
%\begin{lemma}\label{lem-chung-2} 
%Let $u_k$ be a sequence of real numbers. Assume there exists $k_0$ such that 
%$$ u_{k+1} \leq (1-\frac{a_k}{k^s}) u_k + \frac{d}{k^{s+t}},  \quad \forall k \geq k_0, $$
%where $0<s<1$, $t>0$, $a_k \geq a $ and $d > 0$ are real numbers. Then, 
%   $$ \limsup_{k \to \infty} k^{t} u_k \leq \frac{d}{a}.$$
%\end{lemma}

\section{Convergence rate analysis for IG}
\subsection{Rate for quadratic functions} We first analyze the convergence behavior when the component functions are quadratic functions before proceeding to the more general case when functions are twice continuously differentiable. 
Let $f_i(x): \R^n \to \R$ be a quadratic function of the form
    \beq f_i(x) = \frac{1}{2} x^T P_i x - q_i^T x + r_i, \quad i=1,2,\dots, m,
    	\label{def-fi}
    \eeq
where $P_i $ is a symmetic $n\times n$ square matrix, $q_i \in \R^n$ is a column vector and $r_i$ is a real scalar. The gradient and the Hessian of $f$ are given by
   \beq 
   		\nabla f_i(x) = P_i x - q_i, \quad \nabla^2 f_i(x) = P_i. 
   		\label{def-quad-grad-hess}
   	\eeq
The sum $f$ is also a quadratic which we next assume to be strongly convex.

\begin{assumption}\label{assump-sum-is-str-cvx} The sum function $f(x)$ is strongly convex on $\R^n$, i.e. there exists a constant $c>0$ such that the function $f(x) - \frac{c}{2} \| x\| ^2$ is convex on $\R^n$. 
\end{assumption}
%Assume that $f$ is strongly convex, or equivalently that there exists a constant $c>0$ such that  
%	   \beq
%	   		 P: = \nabla^2 f(x) = \sum_{i=1}^m P_i \succeq c I \succ 0.
%	   		 \label{def-P}
%	    \eeq		

	Under this assumption, the optimal solution to the problem \eqref{pbm-multi-agent} is unique, which we denote by $x^*$. In the particular case, when each $f_i$ is a quadratic function given by \eqref{def-quad-grad-hess}, then the Hessian matrix of the sum satisfies
  \beq
	   		 P: = \nabla^2 f(x) = \sum_{i=1}^m P_i \succeq c \Idn \succ 0,
	   		 \label{def-P}
	    \eeq	
and the optimal solution is
  \beq 
  		x^* = P^{-1}\sum_{i=1}^m q_i. 
   	\label{explicit-min-for-quads}   
   \eeq
The inner iterations of IG become
   $$ x_{i+1}^k = (\Idn-\alpha_k P_i) x_i^k + \alpha_k q_i, \quad i=1,2,\dots,m.$$
Therefore, the outer iterations are given by
  \beqa x_1^{k+1} &=& \prod_{i=1}^m (\Idn - \alpha_k P_i) x_1^k + \alpha_k \sum_{i=1}^m \prod_{j=i+1}^m (\Idn - \alpha_k P_j) q_i \label{outer-iter-expr-1} \\
                            %&=& \bigg(I - \alpha_k P + \alpha_k^2\sum_{1\leq i < j\leq m}P_j P_i    + \bigO(\alpha_k^3)\bigg)x_1^k + \alpha_k \sum_{i=1}^m q_i - \alpha_k^2 \sum_{1\leq i < j\leq m} P_jq_i + \bigO(\alpha_k^3)\nonumber \\
                            &=& \bigg(\Idn - \alpha_k P  + \bigO(\alpha_k^3)\bigg)x_1^k + \alpha_k \sum_{i=1}^m q_i + \alpha_k^2 T(\alpha_k) + \bigO(\alpha_k^3) \label{outer-iter-quadratics-1} \\
             				  &=&  \big(\Idn - \alpha_k P \big) x_1^k + \alpha_k \sum_{i=1}^m q_i + \alpha_k^2 E(\alpha_k)  \label{outer-iter-quadratics-2}          				  
  \eeqa  
where 
		\beqa
				T(\alpha_k) &=& \sum_{1\leq i < j\leq m}P_j (P_i x_1^k -  q_i) =  \sum_{1\leq i < j\leq m} P_j \nabla f_i(x_1^k) \label{def-T-alphak} \\
				 E(\alpha_k) &=& T(\alpha_k) + \bigO(\alpha_k) + \bigO(\alpha_k x_1^k). \label{def-R-alphak}
		\eeqa  	
			%As the sum $f$ is also a strongly convex quadratic function, its minimizer $x^*$ exists and is unique. By the first order optimality conditions, we have 
%  $$ \sum_{i=1}^m \nabla f_i(x_*) = \sum_{i=1}^m Z_i^T (Z_i x^* - v_i) = 0. $$
From \eqref{explicit-min-for-quads}, \eqref{outer-iter-quadratics-1} and \eqref{outer-iter-quadratics-2}, it follows that
	 \beqa x_1^{k+1} - x^* &=&  \big(\Idn - \alpha_k P + \bigO(\alpha_k^3) \big) (x_1^k - x^*) + \alpha_k^2  T(\alpha_k) + \bigO(\alpha_k^3). \label{dist-recursion-T}\\
      x_1^{k+1} - x^* &=&  \big(\Idn - \alpha_k P \big) (x_1^k - x^*) + \alpha_k^2  E(\alpha_k) \nonumber. 
     \eeqa
Taking norms of both sides of the last expression, defining 
		$$ \dist_k = \|x_1^k - x^* \| $$ 
as the distance to the optimal solution and using the lower bound \eqref{def-P} on the eigenvalues of $P$, we obtain
     \beqa  \dist_{k+1} &\leq&  \big\| \Idn - \alpha_k P \big\| \dist_k + \alpha_k^2 \| E(\alpha_k)\| \nonumber \\
     								 &\leq&   (1 - \alpha_k c)\dist_k + \alpha_k^2 \| E(\alpha_k)\| 
     								 \quad \mbox{if} \quad \alpha_k \| P \| \leq 1. \label{dist-recursion-quadratics} \\
     							&\leq&  (1 - \alpha_k c)\dist_k + \alpha_k^2 M_\infty
     							 \quad \mbox{if} \quad \alpha_k \| P \| \leq 1 \label{def-Minfty}
     \eeqa
where 
	$$M_\infty :=  \sup_{k\geq 1} \|E(\alpha_k)\|. $$ 
The next theorem is based on analyzing this recursion and is on the achievable convergence rate when the component functions are quadratics.

\begin{theorem}\label{theo-rate-quadratics} Let each $f_i(x) = \frac{1}{2} x_i^T P_i x - q_i^T x + r_i$ be a quadratic function as in \eqref{def-fi} for $i=1,2,\dots,m$. Suppose Assumption \ref{assump-sum-is-str-cvx} holds. Consider the iterates $\{x_1^k\}$ generated by the IG method with stepsize $\alpha_k = R/k^s$ where $R>0$ and  $s \in [0,1]$. Then, we have the following:
	\begin{itemize}
	\item [$(i)$] If $s=1$, then
	      \begin{alignat*}{3}
   &\limsup_{k\to\infty} k \dist_k &~\leq~& \frac{R^2 M}{Rc-1} \quad  \quad  &\mbox{for}&\quad R> 1/c,\\
	&\limsup_{k\to\infty} \frac{k}{\log k} \dist_k &~<~& \infty \quad &\mbox{for}& \quad R=1/c,\\
	&\limsup_{k\to\infty} k^{Rc} \dist_k &~<~& \infty \quad &\mbox{for}&\quad R<1/c.
\end{alignat*}
where $M=\lim_{k\to\infty} \| E(\alpha_k)\| = \big\| \underset{1\leq i < j\leq m}{\sum} P_j \nabla f_i(x^*)\big\|$.
    \item [$(ii)$] If $0<s<1$, then 
         $$ \limsup_{k \to \infty} k^{s} \dist_k \leq \frac{R M}{c}.$$
    \item [$(iii)$] If $s=0$ and $R\leq \frac{1}{\|P\|}$, then 
		%\begin{enumerate}    	
    			  \beqa \dist_{k+1} \leq (1 - c\alpha)^k dist_1 +  \frac{\alpha M_\infty}{c}, \quad \forall k\geq 1,	\label{const-step-recursion}
   \eeqa
where the stepsize $\alpha=\alpha_k = R$ is a constant and $M_\infty$ is defined by \eqref{def-Minfty}. 
    	%\end{enumerate} 
\end{itemize}

\end{theorem}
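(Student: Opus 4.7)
The plan is to combine the distance recursion \eqref{dist-recursion-quadratics} derived before the theorem with Chung's Lemma \ref{lem-chung-1}, handling parts (i) and (ii) uniformly via a ``coarse-then-sharp'' two-step application of Chung, and dispatching part (iii) by direct iteration. First I would verify that the contraction condition $\alpha_k\|P\|\leq 1$ used in \eqref{dist-recursion-quadratics} holds: for $s\in(0,1]$, since $\alpha_k=R/k^s\to 0$, it holds from some $k_0$ onward, while for $s=0$ it is built into the hypothesis $R\leq 1/\|P\|$. A technical wrinkle is that $\|E(\alpha_k)\|$ itself depends on $\dist_k$: from \eqref{def-T-alphak} and \eqref{def-R-alphak} one gets a bound of the form $\|E(\alpha_k)\|\leq A+B\dist_k$ with constants depending only on the $P_i,q_i$ and $x^*$. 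Substituting into \eqref{dist-recursion-quadratics} and choosing $k_0$ so that $\alpha_k^2 B\leq \alpha_k c/2$, one obtains the auxiliary contraction $\dist_{k+1}\leq(1-\alpha_k c/2)\dist_k+\alpha_k^2 A$; a first pass of Chung's lemma shows $\{\dist_k\}$ is bounded, so $M_\infty=\sup_k\|E(\alpha_k)\|<\infty$ and \eqref{def-Minfty} is legitimate.

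With \eqref{def-Minfty} in hand, I would apply Chung's Lemma \ref{lem-chung-1} with $u_k=\dist_k$, $a=Rc$, $t=s$ (so that $\alpha_k^2=R^2/k^{s+t}$), and $d=R^2 M_\infty$. For $s=1$, part (i) of the lemma yields the three subcases according as $Rc$ exceeds, equals, or is less than $t=1$, with asymptotic constant $R^2 M_\infty/(Rc-1)$ in the first subcase; for $0<s<1$, part (ii) of the lemma gives $\limsup_k k^s \dist_k \leq RM_\infty/c$. These are the correct exponents, but with the crude constant $M_\infty$ in place of the sharp $M$. To sharpen, I would observe that the bounds just obtained imply $\dist_k\to 0$, hence $x_1^k\to x^*$, so by continuity $\|E(\alpha_k)\|\to\|\sum_{1\leq i<j\leq m}P_j\nabla f_i(x^*)\|=M$. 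Then for every $\varepsilon>0$ pick $k_\varepsilon$ with $\|E(\alpha_k)\|\leq M+\varepsilon$ for $k\geq k_\varepsilon$, reapply Chung with $d=R^2(M+\varepsilon)$ from index $k_\varepsilon$, and let $\varepsilon\downarrow 0$. This replaces $M_\infty$ by $M$ in the $R>1/c$ case of (i) and throughout (ii); the two $R\leq 1/c$ subcases of (i) are qualitative ``$<\infty$'' statements and are unaffected.

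Part (iii) is obtained by direct iteration of \eqref{def-Minfty}: with $\alpha_k=\alpha=R$ and $\alpha\|P\|\leq 1$,
$$\dist_{k+1}\leq (1-c\alpha)^k \dist_1 + \alpha^2 M_\infty \sum_{j=0}^{k-1}(1-c\alpha)^j \leq (1-c\alpha)^k \dist_1 + \frac{\alpha M_\infty}{c},$$
where the geometric sum is bounded by $1/(c\alpha)$ because $c\alpha\leq \|P\|\alpha\leq 1$. The main obstacle throughout is the bootstrapping structure: because $E(\alpha_k)$ depends on $x_1^k$, the clean recursion \eqref{def-Minfty} is not available a priori and a first application of Chung is needed just to establish boundedness; then a second application is needed to upgrade the crude constant $M_\infty$ to the sharp constant $M$. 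Keeping the two applications cleanly separated, and checking that the sharpening only affects the subcases with explicit asymptotic constants, is the most delicate bookkeeping.
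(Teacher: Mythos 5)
Your proof is correct, and it takes a genuinely different route from the paper in one substantive way. The paper works from the recursion \eqref{dist-recursion-T} involving $T(\alpha_k)$, bounds $\|T(\alpha_k)\|\leq M+h_1\dist_k$ via \eqref{bound-on-Ealphak}, and absorbs the $h_1\dist_k$ term into the \emph{multiplicative} coefficient, obtaining $\dist_{k+1}\leq\bigl(1-\tfrac{Rc}{k^s}+\tfrac{R^2h_1}{k^{2s}}+\bigO(\tfrac{1}{k^{3s}})\bigr)\dist_k+\tfrac{R^2M}{k^{2s}}+\bigO(\tfrac{1}{k^{3s}})$; it then invokes Chung once with $a<Rc$, $d>R^2M$ and lets $a\to Rc$, $d\to R^2M$. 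You instead stay with the clean recursion \eqref{dist-recursion-quadratics} (coefficient exactly $1-\alpha_kc$), run a coarse first pass of Chung to get $\dist_k\to 0$, conclude $\|E(\alpha_k)\|\to M$, and then rerun Chung with $a=Rc$ and $d=R^2(M+\varepsilon)$. Both approaches rely on the same key lemma and yield the same constants. Your two-pass version is actually slightly cleaner for the boundary subcases $R\leq 1/c$ of part $(i)$: since your second-pass recursion has the exact coefficient $1-\tfrac{Rc}{k}$, Chung applies with $a=Rc$ directly, giving the stated $k/\log k$ and $k^{Rc}$ rates, whereas the paper's limiting argument $a\to Rc$ from below requires an extra word to justify reaching those exponents. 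Part $(iii)$ is handled identically in both proofs. One small presentational note: your first-pass boundedness argument via Chung is only available for $s\in(0,1]$ (Chung requires $s>0$); for $s=0$ both you and the paper simply write the bound in terms of $M_\infty$ without establishing its finiteness, so you are on the same footing there.
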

\begin{proof} 
We first prove parts $(i)$ and $(ii)$. So, assume $0 < s \leq 1$. Plugging the expression for the stepsize into \eqref{dist-recursion-T} and taking norms, we obtain 
	\beq \dist_{k+1} \leq  \big(1 - \frac{Rc}{k^s} + \bigO(\frac{1}{k^{3s}})\big)\dist_k + \frac{R^2}{k^{2s}} \| T(\alpha^k)\|  + \bigO(\frac{1}{k^{3s}}).
	\label{dist-recursion-quad-2}
	\eeq
It is also easy to see from \eqref{def-T-alphak} that   
   \beqa 
   		\| T(\alpha_k)\| &\leq& \big\| \sum_{1\leq i < j\leq m} P_j \nabla f_i(x^*) \big\|+ \big\|\sum_{1\leq i < j\leq m} P_j \big(\nabla f_i(x_1^k) - \nabla f_i(x^*)\big) \big\|   \nonumber \\
   								  &=& \big\| \sum_{1\leq i < j\leq m} P_j \nabla f_i(x^*) \big\|+ \big\|\sum_{1\leq i < j\leq m} P_j P_i (x_1^k - x^*) \big\|  \nonumber \\
   								  &\leq& M + h_1 \dist_k  \label{bound-on-Ealphak}
   	\eeqa
for some finite positive constant $h_1$ (that depends on $\{P_i\}_{i=1}^m$ and $\{\nabla f_i(x^*)\}_{i=1}^m$). Then, from \eqref{dist-recursion-quad-2}, 
   $$ \dist_{k+1} \leq  \big(1 - \frac{Rc}{k^s} + \frac{R^2 h_1}{k^{2s}} + \bigO(\frac{1}{k^{3s}})\big)\dist_k + \frac{R^2 M}{k^{2s}} + \bigO(\frac{1}{k^{3s}}). 
   $$
Finally, applying Lemma \ref{lem-chung-1} with a choice of $0<a<Rc$, $t=s$ and $d>R^2M$ and letting $a\to Rc$ and $d\to R^2 M$ proves directly parts $(i)$ and $(ii)$.
%Then, it follows from  Lemma \ref{lem-chung-1} that $\underset{k\to\infty}{\lim} \dist_k = 0$ and therefore $\underset{k\to\infty}{\lim} x_1^k = x^*$ proving global convergence of IG. This enables us to improve the upper bound for the norm of $E(\alpha_k)$ obtained in \eqref{bound-on-Ealphak} further asymptotically. In particular as $\alpha_k\to\infty$ and $x_1^k \to x^*$ in \eqref{def-R-alphak}, it follows that
%     $$ \lim_{k\to\infty} \| E(\alpha_k)\| = M.$$
%Then, applying Lemma \ref{lem-chung-1} again with a choice of $a=Rc$ and $t=s$ and $d>R^2 M$ and letting $d\to R^2 M$ proves directly parts $(i)$ and $(ii)$. 
To prove part $(iii)$, assume $s=0$ and $R\leq \frac{1}{\|P\|}$. Then, the stepsize $\alpha_k = \alpha = R$ is a constant and by \eqref{dist-recursion-quadratics}, for all $k\geq 1$, 
    $$\dist_{k+1} \leq (1 - \alpha c)\dist_k + \alpha^2 M_\infty. $$ 
From this relation, by induction we obtain for all $k\geq 1$,
\beqas \dist_{k+1} \leq (1 - c\alpha)^k \dist_1 +  \alpha^2 M_\infty \sum_{j=0}^{k-1} (1-c\alpha )^j.
   \eeqas
 As the geometric sum $\sum_{j=0}^{k-1} (1-c\alpha )^j \leq \frac{1}{c\alpha}$ for all $k\geq 1$, this proves part $(iii)$.    
\end{proof}

\begin{rema} Under the setting of Theorem \ref{theo-rate-quadratics}, in the special case when $x^*$ is a global minimizer for each of the component functions, we have $\nabla f_i(x^*) = 0$ for each $i=1,2,\dots,m$. This implies that $M=0$ and for stepsize $R/k$ with $R>1/c$ we have $\underset{k\to\infty}{\limsup} ~k \dist_k = 0$, i.e. $\dist_k = o(1/k)$. This rate can actually be improved further as follows. In this special case, assume for simplicity that $x^* = 0$ (the more general case can be treated similarly by shifting the coordinates and considering the functions $ f_i(x-x^*)$ and $f(x-x^*)$). Then, this implies that $q_i = 0$ for all $i$ and therefore from \eqref{outer-iter-expr-1} we have,
      \beqas \dist_{j+1} &=&  \bigg \|\prod_{i=1}^m \big(\Idn - \frac{R}{k} P_i \big)\bigg\| \dist_j = \bigg \|\Idn - \frac{R}{j} P + \bigO(1/j^2)\bigg\| \dist_j \\
                                  &\leq& \big(1-\frac{Rc}{j} + \bigO(1/j^2)\big) \dist_j \leq (1-\frac{\delta}{j})\dist_j, 
      \eeqas
where the last inequality holds for any $1 < \delta <Rc $ and $j$ large enough. As $\prod_{j=2}^k \big(1-\frac{\delta}{j}\big) \approx  \prod_{j=2}^k \big(1-\frac{1}{j}\big)^\delta = 1/k^\delta$, it follows that $ \dist_k = \bigO(1/k^\delta)$ for any $1 < \delta < Rc$ which is a stronger statement than $\dist_k = o(1/k)$.   
 \end{rema}
 % \lim_{\alpha_k \to 0} R(\alpha_k)      
%Note that $R(\alpha_k)$ is bounded for $\alpha_k$ bounded.  
\subsection{Rate for smooth component functions}
In addition to Assumption \ref{assump-sum-is-str-cvx} on the strong convexity of $f$, we adopt the following assumptions that have appeared in a number of papers in the literature for analyzing incremental methods including \cite{AlgEkfs2003}, \cite{Bertsekas1996incremental}, \cite{GurOzPa15}. 

%COULD YOU UPDATE THE ASSUMPTIONS TO THE ONES WE USED IN THE ISMP PRESENTATION?

\begin{assumption}\label{assum-C2} The functions $f_i$ are convex and twice continuously differentiable on $\R^n$ for each $i=1,2,\dots,m$.\todo{\hfill Relax to \\ \hfill Lipschitz \\ \hfill gradients?}
% Furthermore, the sum $f$ is strongly convex\footnote{This condition is satisfied for instance when one of the component functions is strongly convex or when a regularization term $\varepsilon \|x\|^2$ for some $\varepsilon>0$ is added to one of the functions.} i.e. there exists $c>0$ such that 
%  \[ 0 \prec c I_n \preceq \nabla^2 f (x) , \quad \mbox{for all} \quad x\in\R^n.\]

%Furthermore, at least one of them is strongly convex, i.e. there exists $i_1$ such that $f_{i_1}(x)$ is strongly convex with parameter $c$ satisfying 
%     \[ 0 \prec c I \preceq \nabla^2 f _{i_1}(x) , \quad \mbox{for all} \quad x\in\R^n.\]
\end{assumption}
\todo{\hfill Relax \\ \hfill convexity \\ \hfill assumption?}
\begin{assumption}\label{assum-iters-bdd} The iterates $\{x_1^k, x_2^k, \dots, x_m^k\}_{k\geq 1}$ are uniformly bounded, i.e. there exists a non-empty compact  Euclidean ball $\mathcal{X} \subset \R^n$ that contains all the iterates.
\end{assumption}

A consequence of these two assumptions is that by continuity of the first and second derivatives on the compact set $\mathcal{X}$, the first and second derivatives are bounded. Therefore there exists a constant $G$ such that 
  \beq 
  		\max_{1\leq i \leq m}\sup_{x \in \mathcal{X}} \| \nabla f_i(x)\| \leq G
   \label{def-G}   
   \eeq 
and there exists constants $L_i:=\max_{z\in \mathcal{X}} \|\nabla^2 f_i(z)\| \geq 0$ such that 
        \beqa \| \nabla f_i(x) - \nabla f_i(y) \|  &\leq& L_i \|x-y\|, \quad \mbox{for all} \quad x\in\mathcal{X}, \quad i=1,2,\dots,m. \label{eq-lip-constant}
   \eeqa
From the triangle inequality, $f$ has also Lipschitz gradients on $\X$ with constant 
\beqa        
         L=\sum_{i=1}^m L_i \label{def-L}.
\eeqa 
Another consequence is that, an optimal solution to the problem \eqref{pbm-multi-agent}, which we denote by $x^*$, exists and is unique by the strong convexity of $f$.  Furthermore, these two assumptions are sufficient %\footnote{These assumptions are not absolutely necessary for global convergence and can be relaxed (see \cite{GurOzPa15,Bertsekas1996incremental}), but here we consider these assumptions in order to be able to prove better rate results.} 
for global convergence of both the incremental Newton and the incremental gradient methods to $x^*$ (see \cite{GurOzPa15}, \cite{Bertsekas99nonlinear} for a more general convergence theory). 
%We also introduce 
%     \beq\label{str-cvx-constant-at-opt} c_* = \lambda_{\min} (\nabla^2 f(x^*)) \geq  c
%     \eeq
%which is the local strong convexity constant of $f$ at the optimal solution $x^*$.     
%
%We will also make the gradient boundedness assumption that is common to analyze subgradient algorithms (\cite{NedicBert2001IncSubgrad}). 
%
%\begin{assumption} \label{assum-grad-bdd}\textbf{(Gradient boundedness)} There exists a constant $G$ such that 
%  $$\sup_{x \in \R^n} \| \nabla f_i(x)\| \leq G, \quad \mbox{for all} \quad i=1,2,\dots,m.$$
%\end{assumption}
\subsubsection{Analyzing IG as a gradient descent with errors}\label{subsub-ig-as-grad-descent}
We can rewrite the inner iterations \eqref{eq-inner-update} more compactly as \begin{equation} x_1^{k+1} = x_1^k - \alpha_k \bigl( \nabla f (x_1^k) - e^k \bigr), \quad k\geq 1, \quad i=1,2,\dots,m,
		\label{outer-iter-perturbed-gradients}
   \end{equation}
where the term
   \beq\label{grad-error-incr-grad} e^k = \sum_{i=1}^m \bigl( \nabla f_i(x_1^k) -  \nabla f_i(x_i^k) \bigr)
   \eeq
can be viewed as the gradient error. If Assumption \ref{assum-C2} holds, we can substitute
    $$ \nabla f(x_1^k) = A_k (x_1^k - x^*) $$
into \eqref{outer-iter-perturbed-gradients} where $A_k =\displaystyle \int_0^1 \nabla^2 f(x^*+\tau (x^k-x^*)) d\tau$ is an average of the Hessian matrices on the line segment $[x_1^k, x^*]$ to obtain
 \begin{equation} x_1^{k+1} - x^* = (\Idn - \alpha_k A_k) (x_1^k - x^*) + \alpha_k e^k , \quad k\geq 1, \quad i=1,2,\dots,m.
   \end{equation}
Taking norms of both sides, this implies that
\beq \dist_{k+1} \leq \| \Idn - \alpha_k A_k \| \dist_k + \alpha_k \|e^k \|. \label{incr-grad-dist-difference-eqn}
\eeq
%where the distance to the optimal solution is denoted by $$\dist_k = \|x_1^k - x^* \|.$$
These relations show that the evolution of the distance to the optimal solution is controlled by the decay of the stepsize $\alpha_k$ and the gradient error $\|e^k\|$.  
This motivates deriving tight upper bounds for the gradient error. Note also that under Assumptions \ref{assump-sum-is-str-cvx}, \ref{assum-C2} and \ref{assum-iters-bdd},  the Hessian of $f$ and the averaged Hessian matrix $A_k$ admit the bounds 
   \beq c\Idn \preceq \nabla^2 f(x), A_k \preceq L\Idn, \quad x\in \X.
   \label{hessian-bounds}
   \eeq 
(see also \eqref{def-L}).   
%This observation motivates our methodology for deriving the rate results. The key idea for deriving our rate results will be to bound the gradient error and then apply Lemmas \ref{lem-chung-1} and \ref{lem-chung-2} to the recursive distance inequality \eqref{incr-grad-dist-difference-eqn}.
The gradient error consists of the difference of gradients evaluated at different inner steps (see \eqref{grad-error-incr-grad}). This error can be controlled by the Lipschitzness of the gradients as follows: For any $k\geq 1$, 
%
% %By Assumption \ref{assum-strong-cvx-global}, for each $i$, we have $\nabla f_i (x) \succ 0$ and $\sum_{i=1}^m \nabla^2 f_i(x) \leq LI$ so that 
 %   $$ \nabla^2 f_i (x) \preceq L I. $$
%This implies that the gradients are Lipschitz with constant $L$, i.e, 
%    \beq\label{gradient-Lipschitzness} \| \nabla f_i(x) - \nabla f_i(y)
%    | \leq L \| x - y \|, \quad x,y \in \mathcal{X},
%    \eeq
%By \eqref{gradient-Lipschitzness}, 
\beqa \| e^k \|  &\leq& \sum_{i=2}^m L_i \| x_1^k - x_i^k  \| \leq \sum_{i=2}^m L_i  \sum_{j=1}^{i-1} \|  x_j^k -  x_{j+1}^k  \|  \nonumber \\
                        &\leq& \sum_{i=2}^m L_i  \alpha_k \sum_{j=1}^{i-1} \| \nabla f_j(x_j^k)\| \nonumber \\
                        &\leq& \alpha_k \hM,    \label{incr-grad-rate-recursion}
\eeqa
where 
		\beq 
		\hM := L  G m, 
		\label{def-M}
	\eeq
$L$ is a Lipschitz constant for the gradient of $f$ as in \eqref{def-L} and $G$ is an upper bound on the gradients as in \eqref{def-G}. Finally, plugging this into \eqref{incr-grad-dist-difference-eqn} and using the bounds \eqref{hessian-bounds} on the eigenvalues of $A_k$,
\beqa \dist_{k+1} &\leq& \max(\|1-\alpha_k c\|, \|1- \alpha_k L\|)\dist_k + \alpha_k^2 \hM \nonumber \\ 
                          &\leq& (1-\alpha_k c) \dist_k + \alpha_k^2 \hM \quad \mbox{if} \quad \alpha_k L \leq 1. \label{max-eigval-recursion}
\eeqa 
This is the analogue of the recursion \eqref{def-Minfty} obtained for quadratics with the only difference that the constants $M_\infty$ and $\|P\|$ are replaced by their analogues $\hM$ and $L$ respectively. Then, a reasoning along the lines of the proof of Theorem \ref{theo-rate-quadratics} yields the following convergence result which generalizes Theorem \ref{theo-rate-quadratics} from quadratic functions to smooth functions just by modifying the constants properly. We skip the proof for the sake of brevity.
\begin{theorem}\label{theo-ig-rate-general}  Let $f_i(x):\R^n \to \R$, $i=1,2,\dots,m$ be  component functions satisfying Assumptions \ref{assump-sum-is-str-cvx} and \ref{assum-C2}. Consider the iterates $\{x_1^k, x_2^k, \dots, x_m^k\}_{k\geq 1}$ obtained by the IG iterations \eqref{eq-inner-update} with a decaying stepsize $\alpha_k = R/k^s$ where $R>0$ and  $s \in [0,1]$. Suppose that Assumption \ref{assum-iters-bdd} is also satisfied. Then, all the conclusions in parts $(i)$, $(ii)$ and $(iii)$ of Theorem \ref{theo-rate-quadratics} remain valid by replacing $M$ and $M_\infty$ with $\hM$ and replacing $\|P\|$ with $L$.
\end{theorem}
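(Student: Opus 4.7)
The plan is to transport the proof of Theorem~\ref{theo-rate-quadratics} essentially verbatim, exploiting the structural identity already emphasized in the excerpt: the recursion~\eqref{max-eigval-recursion} for smooth $f_i$ has the same form as~\eqref{def-Minfty} in the quadratic case, with $\|P\|$ replaced by $L$ and $M_\infty$ replaced by $\hM$. The uniform error bound $\|e^k\| \leq \alpha_k \hM$ in~\eqref{incr-grad-rate-recursion} already invokes Assumptions~\ref{assum-C2} and~\ref{assum-iters-bdd} and is valid uniformly in $k$, which is precisely what allows the quadratic-to-smooth transfer to reduce to bookkeeping.

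For parts $(i)$ and $(ii)$, I would substitute $\alpha_k = R/k^s$ into~\eqref{max-eigval-recursion}. Since $s>0$, the precondition $\alpha_k L \leq 1$ holds for every $k \geq k_0 := \lceil (RL)^{1/s}\rceil$, and on this tail the recursion reads
$$\dist_{k+1} \;\leq\; \Big(1 - \frac{Rc}{k^s}\Big)\dist_k + \frac{R^2 \hM}{k^{2s}}.$$
Applying Lemma~\ref{lem-chung-1} with $u_k = \dist_k$, $a = Rc$, $t = s$ and $d = R^2 \hM$ (taking any $d' > R^2 \hM$ and then letting $d' \to R^2 \hM$ if one insists on strict positivity) finishes both parts: in part $(i)$ the three sub-cases $a>t$, $a=t$, $a<t$ of Chung's lemma translate one-for-one into the cases $R>1/c$, $R=1/c$, $R<1/c$, yielding the bounds $R^2\hM/(Rc-1)$, the $\log k$ estimate, and the $k^{Rc}$ estimate respectively; in part $(ii)$ the bound $\limsup k^t u_k \leq d/a$ specializes to $\limsup k^s \dist_k \leq R\hM/c$.

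For part $(iii)$, the stepsize $\alpha_k = R$ is constant and $R \leq 1/L$ ensures $\alpha_k L \leq 1$ for all $k$, so~\eqref{max-eigval-recursion} is valid from $k=1$ onwards. Unrolling this linear recursion and bounding the geometric sum $\sum_{j=0}^{k-1}(1-cR)^j \leq 1/(cR)$ gives
$$\dist_{k+1} \;\leq\; (1-cR)^k\, \dist_1 + \frac{R\,\hM}{c},$$
which is exactly the stated inequality with $\alpha = R$ and $M_\infty$ replaced by $\hM$.

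I do not expect a genuine obstacle: all the analytic content was absorbed into the derivation of~\eqref{max-eigval-recursion}, and the remainder is a direct invocation of Lemma~\ref{lem-chung-1} together with a one-line geometric-sum estimate. The only conceptual point worth flagging is that, unlike the quadratic argument where $T(\alpha_k)$ was split into an asymptotic constant $M$ plus a $\dist_k$-proportional transient term (the latter being absorbed into the coefficient of $\dist_k$), here the cruder uniform bound $\|e^k\| \leq \alpha_k \hM$ is used throughout; this is precisely what causes both constants $M$ and $M_\infty$ of Theorem~\ref{theo-rate-quadratics} to collapse into the single constant $\hM$ in the present statement.
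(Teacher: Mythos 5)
Your proposal is correct and mirrors exactly what the paper intends: the derivation of \eqref{max-eigval-recursion} already replaces $M_\infty$ by $\hM$ and $\|P\|$ by $L$, after which parts $(i)$ and $(ii)$ follow by plugging $\alpha_k = R/k^s$ into that recursion and applying Lemma~\ref{lem-chung-1} with $a = Rc$, $t=s$, $d = R^2\hM$, and part $(iii)$ follows by unrolling the constant-stepsize recursion and bounding the geometric sum. The only difference worth noting is cosmetic: because \eqref{max-eigval-recursion} has no $\bigO(1/k^{3s})$ remainder terms, Chung's lemma applies directly with $a = Rc$ and $d = R^2\hM$, so the limiting argument ($a\to Rc$, $d\to R^2 M$) used in the quadratic case is not actually needed here, as you correctly observe.
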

%Next, we establish our rate results for the diminishing stepsize rules of the form $\alpha_k = \Theta(1/k^s)$, $0 < s\leq 1$ and the constant stepsize rule. For the decaying stepsize, the main idea is to apply Lemma \ref{lem-chung-1} to the recursion \eqref{max-eigval-recursion}. 
\begin{rema}\label{rema-compare-constants-to-quad-case} Under the conditions of Theorem \ref{theo-rate-quadratics}, the quadratic functions $f_i$ have Lipschitz continuous gradients with constants $L_i = \|P_i\|$. Therefore,
	$$ 
		 M \leq \underset{1\leq i \leq m}{\sum} \sum_{j=i+1}^m L_j ~\| \nabla f_i(x^*)\|  \leq  \underset{1\leq i \leq m}{\sum} L ~\| \nabla f_i(x^*)\| \leq \hM 
	$$
by the definitions of $L$ and $\hM$ from \eqref{def-L} and \eqref{def-M}. This shows that the rate results obtained in Theorem \ref{theo-rate-quadratics} with decaying stepsize for the special case of quadratic functions is tighter than that of the general case of smooth functions obtained in Theorem \ref{theo-ig-rate-general} as expected.
\end{rema}

Under a strong convexity-type condition and subgradient boundedness, N\'edic and Bertsekas consider the IG method with constant stepsize and show that when $f_i$ are convex but not necessarily smooth or differentiable, for any given $\varepsilon>0$, it suffices to have $\bigO({\log(\frac{1}{\varepsilon})}/{\varepsilon^2}) $ cycles of IG for converging to the $\varepsilon$-neighborhood $\{x \in \R^n: \|x-x^*\| \leq \varepsilon\}$ of an optimal solution $x^*$ \cite[Proposition 2.4]{NedicBert2001IncSubgrad}. 
%For functions with Lipschitz continuous gradients, this is equivalent to $\bigO({\log(\frac{1}{\varepsilon})}/{\varepsilon^2}) $ cycles for guaranteeing $\varepsilon$ suboptimality of the objective function values by the strong convexity). 
The following Corollary of Theorem \ref{theo-ig-rate-general} shows that this result can be improved to $\bigO({\log(\frac{1}{\varepsilon})}/{\varepsilon})$ when the component functions are smooth and strongly convex.
\begin{corollary}\label{coro-ig-constant-complexity} Under the conditions of Theorem \ref{theo-ig-rate-general}, let $\varepsilon < 2\hM/c^2 $ be a given positive number. IG with constant stepsize $\alpha = {\varepsilon c} / {(2\hM) }$ requires at most 
            \beq \bigO\bigg(\frac{\hM}{c^2} \frac{\log(1/\varepsilon)}{\varepsilon} \bigg) \label{def-K} 
            \eeq
cycles to guarantee convergence to an $\varepsilon$-neighborhood of the optimal solution $x^*$. %Furthermore, the number of cycles required are at least $\Omega({\log(\frac{1}{\varepsilon})}/{\varepsilon})$.
\end{corollary}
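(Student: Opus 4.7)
The plan is to invoke Theorem \ref{theo-ig-rate-general} part $(iii)$ directly (the generalization of Theorem \ref{theo-rate-quadratics}$(iii)$ with $M_\infty$ replaced by $\hM$ and $\|P\|$ replaced by $L$) and then balance the two terms in the resulting geometric-plus-bias bound against $\varepsilon$. Concretely, with the constant stepsize $\alpha = \varepsilon c/(2\hM)$ and assuming $\alpha L \leq 1$, Theorem \ref{theo-ig-rate-general}$(iii)$ gives
\beqs
 \dist_{k+1} \leq (1-c\alpha)^k \dist_1 + \frac{\alpha \hM}{c}, \qquad k\geq 1.
\eeqs
The first step would be to observe that our choice of $\alpha$ makes the bias term exactly $\alpha\hM/c = \varepsilon/2$, so it suffices to drive the transient term $(1-c\alpha)^k \dist_1$ below $\varepsilon/2$.

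Next, I would use the elementary inequality $1-c\alpha \leq e^{-c\alpha}$ (valid since $c\alpha \in (0,1)$) to obtain $(1-c\alpha)^k \dist_1 \leq e^{-kc\alpha} \dist_1$. Requiring $e^{-kc\alpha}\dist_1 \leq \varepsilon/2$ yields the sufficient condition
\beqs
 k \geq \frac{1}{c\alpha} \log\!\left(\frac{2\dist_1}{\varepsilon}\right) = \frac{2\hM}{c^2 \varepsilon} \log\!\left(\frac{2\dist_1}{\varepsilon}\right),
\eeqs
which is precisely the complexity estimate \eqref{def-K} after absorbing the constant $\dist_1$ and the $\log 2$ into the $\bigO(\cdot)$. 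So, after $\bigO\big(\frac{\hM}{c^2}\frac{\log(1/\varepsilon)}{\varepsilon}\big)$ cycles both terms are simultaneously at most $\varepsilon/2$, giving $\dist_{k+1} \leq \varepsilon$.

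The only genuinely subtle point is verifying the admissibility condition $\alpha L \leq 1$ needed to apply the recursion from Theorem \ref{theo-ig-rate-general}$(iii)$; with the prescribed stepsize this amounts to $\varepsilon \leq 2\hM/(cL)$, which is what the hypothesis $\varepsilon < 2\hM/c^2$ is meant to guarantee (up to the standard convention $L\geq c$). The rest is a routine two-line calculation once Theorem \ref{theo-ig-rate-general}$(iii)$ is in hand, so I do not expect a serious obstacle beyond carefully recording the constants.
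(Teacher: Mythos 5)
Your proof takes essentially the same route as the paper's: apply Theorem~\ref{theo-ig-rate-general}(iii) with the prescribed stepsize, note that the bias term $\alpha\hM/c$ equals $\varepsilon/2$ exactly, bound the transient via $(1-c\alpha)^k \leq e^{-c\alpha k}$, and solve for $k$; this reproduces \eqref{def-K} once constants are absorbed into the $\bigO(\cdot)$.

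The one place you diverge is the admissibility remark, and there your instinct is good but the conclusion is drawn in the wrong direction. To apply the recursion \eqref{const-step-recursion} from Theorem~\ref{theo-ig-rate-general}(iii) you indeed need $\alpha L \leq 1$, i.e.\ $\varepsilon \leq 2\hM/(cL)$. Since $L \geq c$ always, we have $2\hM/(cL) \leq 2\hM/c^2$, so the stated hypothesis $\varepsilon < 2\hM/c^2$ is \emph{weaker} than what is actually needed and does not by itself guarantee $\alpha L \leq 1$; the convention $L\geq c$ makes the gap worse, not better. Interestingly, the paper's own proof has the same imprecision — it only verifies $c\alpha < 1$. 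A clean fix is to state the hypothesis as $\varepsilon \leq 2\hM/(cL)$, or to replace the stepsize with $\alpha = \min\{\varepsilon c/(2\hM),\, 1/L\}$; in the latter case, when the $1/L$ cap is active the bias term is $\hM/(cL) \leq \varepsilon/2$ and the transient decays at rate $(1-c/L)^k$, which still yields \eqref{def-K} after the routine calculation. Otherwise the argument is correct and complete.
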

\begin{proof} Given such $\varepsilon>0$ and stepsize $\alpha$, we note that $c\alpha<1$ and ${\alpha  \hM}/{c} = \varepsilon/2$. Furthermore, by Theorem \ref{theo-ig-rate-general}, the inequality \eqref{const-step-recursion} holds with $M_\infty$ replaced by $\hM$.  Therefore, there exists a constant $K$ such that 
   $$(1 - c\alpha)^k \dist_1 \leq \exp(- c\alpha k) \dist_1 < \frac{\varepsilon}{2}, \quad \forall k\geq K,$$
and $\dist_{k+1}< \varepsilon$ for all $k\geq K$, i.e. the iterates lie inside an $\varepsilon$-neighborhood of the optimizer after $K$ cycles. By taking $\log$ of both sides and using $\log(1-z) \approx z$ for $z$ around zero, straightforward calculations show that this condition is satisfied for $K$ satisfying \eqref{def-K}. 
\end{proof}
%\sub

The rate results in Theorem \ref{theo-rate-quadratics} for $\bigO(1/k)$ stepsize (when $s=1$) requires adjusting the stepsize parameter $R$ to the strong convexity parameter $c$ which requires the estimation or the knowledge of a lower bound for $c$ (need to choose $R>1/c$). The following example illustrates that the convergence can be slow when $R$ is not properly adjusted to $c$. Similar issues with $1/k$-decay step sizes are also widely noted in the analysis of the stochastic gradient descent method in the stochastic approximation literature, see e.g. \cite{Chung:1954iy, Nemirovski:2009kb,Moulines:2011vy, Bach:2014uc}. 

\begin{example}\label{exam-slow-conv}  Let $f_i(x) = x^2/20$ for $i=1,2$, $x\in \R$. Then, we have $m=2$, $c = 1/5$ and $x^*=0$. Take $R=1$ which corresponds to the stepsize $1/k$. The IG iterations are
    $$ x_1^{k+1} = \bigg(1 - \frac{1}{10k}\bigg)^2 x_1^k.$$
If $x_1=1$, a simple analysis similar to \cite{Nemirovski:2009kb} shows $x_1^{k} = \hbox{dist}_k > \Omega(\frac{1}{k^{1/5}})$.
\end{example}

%\begin{rema}\label{exam-slow-conv}  In the setting of part $(ii)$ of Theorem \ref{theo-incr-grad-new-rate} when $R < 1/c$, the resulting rate we prove is $\bigO(1/k^{Rc})$. A natural question to ask is whether this rate result can be improved. The following example shows that when $R<1/c$, there are simple examples leading to $\Omega(1/k^{Rc})$ convergence and hence our result matches the lower bound: Let $f_i(x) = x^2/20$ for $i=1,2$, $x\in \R$. Then, we have $m=2$, $c_* = 1/5$ and $x^*=0$. Take $R=1$ which corresponds the stepsize $1/k$ and $Rc=1/5$. The IG iterations are
%    $$ x_1^{k+1} = \bigg(1 - \frac{1}{10k}\bigg)^2 x_1^k.$$
%If $x_1=1$, a simple analysis similar to the one in \cite{Nemirovski:2009kb} shows that $x_1^{k} = \dist_k > \frac{b}{k^{1/5}} = \frac{a}{k^{Rc}}$ for a positive constant $b\in\R$.   
%\end{rema}

\subsection{Lower bounds}
\label{subsec-least-squares}

Consider the following set of quadratic functions which are stronly convex with parameter $c$ and have Lipschitz gradients with constant $L$: 
  \beqas \Cmnsub = \bigcup_{n=1}^\infty \bigg \{\bar{f}(x) = \frac{1}{2} x^T P x - q^Tx + r ~\bigg | ~ P\mbox{ symmetric},  
     \hesslb \Idn \preceq P \preceq L \Idn; x,q \in \R^n ; r \in \R \bigg \}.
   \eeqas
%  $$ \Cmnsub = \{\tilde{f} \in \Cmn ~:~ \tilde{f}~\mbox{is}~c-\mbox{strongly convex} \quad \mbox{and} \nabla^2 f_i(x) \leq L \mbox{for each} i=1,2,\dots,m.  $$
%We also introduce 
%     $$\Cmnsubinf = \bigcup_{n=1}^\infty \Cmnsub. $$ 
Theorem \ref{theo-rate-quadratics} and Remark \ref{rema-compare-constants-to-quad-case} shows that when IG is applied to quadratic functions $\bar{f}_i:\R^n \to \R$ with a sum $\bar{f} \in \Cmnsub$ using a stepsize $\alpha_k = R/k$ where $R$ is properly chosen as a function of the strong convexity constant $c$, it results in 
    $$\limsup_{k\to\infty} k \dist_k \leq \frac{{M}}{Rc-1} \leq \frac{\hM}{Rc-1} = \frac{LGm}{Rc-1}. $$
In other words, $\dist_k = \bigO(1/k)$. A natural question is whether one could improve this rate by choosing a perhaps alternative stepsize. Would it be possible to obtain $\dist_k = o(1/k)$ uniformly (for every such $\{\bar{f_i}\}_{i=1}^m$ and $m$)? The next result gives a negative answer to this question showing that no matter which stepsize we choose, there exists simple quadratic functions which result in iterates $\{x_1^k\}$  satisfying $\dist_k \geq \tilde{\Omega}(1/k)$. 
%We restrict ourselves to the stepsizes satisfying conditions (C1) and (C2) which are necessary and sufficient conditions for global convergence for any member of $\mathcal{C}_{m,n}$. More formally, we ask the following question: ``Is there a stepsize rule that satisfies conditions (C1) and (C2) such that when IG is applied with this stepsize to any function $f \in \mathcal{C}_{m,n}$ for some $m$ and $n$, the IG iterates satisfy $$ \limsup_{k \to \infty} k \dist_k \to 0 $$
%independent of the choice of $f$?" 
%The following result shows that the answer is no in the sense that we have a lower bound of the form $\dist_k \geq \tilde{\Omega}(1/k)$ and therefore 
%   $\limsup k \dist_k >0.$ 
\begin{theorem}\label{theo-lower-bound-ig}  Consider the following IG iterations applied to quadratic component functions $\bar{f}_i:\R^n \to \R$ where $\bar{f}=\big(\sum_{i=1}^m \bar{f}_i\big) \in \Cmnsub$:
   $$x_{i+1}^k = x_i^k - \sigma(c, L, k) \nabla \bar{f}_i(x_i^k), \quad k\geq 1, \quad i=1,2,\dots,m,$$
where the stepsize sequence $\alpha_k = \sigma(c, L, k):\R_+^3 \to \R_+$ is independent from the choice of each $\bar{f}_i$. Suppose that for every choice of $m,n$ and such $\{ \bar f_i : \R^n \to \R \}_{i=1}^m$, we have 
	$$\limsup_{k \to \infty} k\dist_k \leq \bar b $$	
where $\bar b>0 $ depends only on $L, G, m, c$ and $\sigma$. Then, the following statements are true:
\begin{enumerate}
	\item The stepsize sequence satisfies $\limsup_{k\to\infty} k \alpha_k \geq \underline b$ where $\underline b = \bar{b} / L $.
	\item There exists positive integers $\tilde{m}$, $\tilde{n}$ and functions $\{\tilde{f}_i : \R^{\tilde{n}} \to \R \}_{i=1}^{\tilde{m}}$ such that $\tilde{f}=\big(\sum_{i=1}^{\tilde{m}} \tilde{f}_i\big) \in \Cmnsub$ and the iterates $\{x_1^k\}$ generated by the IG applied to $\tilde{f}=\sum_{i=1}^m \tilde{f}_i$ satisfy 
		$$ \dist_k \geq \tilde{\Omega} ( 1/k). $$
%where the constant for the lower bound can be chosen to depend only on $L, G, \tilde{m}, c$ and $\sigma$.			
\end{enumerate}

\end{theorem}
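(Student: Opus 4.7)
Both statements will be proved by testing the hypothesis on explicit one-dimensional quadratic instances for which the IG iteration collapses to a scalar linear recursion that can be analyzed in closed form.

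\textbf{Part 1.} I would argue by contradiction. Assume $\gamma := \limsup_{k\to\infty} k\alpha_k < \underline{b} = \bar b/L$, so that for some $\epsilon>0$ and all $k$ beyond some $k_0$, $\alpha_k L \leq (\bar b - L\epsilon)/k$. As test problem take $m=1$, $\bar f(x) = (L/2)x^2$, which lies in $\Cmnsub$ with $x^*=0$, and start from $x_1^1 = D>0$. The IG iteration reduces to
\beqs
\dist_k = D\prod_{j=1}^{k-1}\bigl|1-\alpha_j L\bigr|,
\eeqs
whose factors are eventually positive. Bounding the tail product from below by $\prod_{j=k_0}^{k-1}(1-(\bar b - L\epsilon)/j)$ and using the standard Gamma-function/Stirling asymptotic $\prod_{j=k_0}^{k-1}(1-\mu/j)\sim C_0 k^{-\mu}$ yields $k\dist_k \geq c_0 D\, k^{1-\bar b + L\epsilon}$ for large $k$. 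Choosing $D$ large enough in the range allowed by the hypothesis (tracking the corresponding value $G=LD$) makes the right-hand side exceed $\bar b$, contradicting $\limsup_k k\dist_k \leq \bar b$.

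\textbf{Part 2.} Building on Part 1, I would exhibit a two-quadratic instance in one variable whose gradient error does not cancel at the optimum: take $\tilde m = 2$, $\tilde n = 1$, $\tilde f_1(x) = (L/4)x^2 + qx$, $\tilde f_2(x) = (L/4)x^2 - qx$ for a fixed $q>0$. Then $\tilde f = (L/2)x^2\in\Cmnsub$, $x^*=0$, and chaining the two inner updates gives the outer recursion
\beqs
x_1^{k+1} = (1-\alpha_k L/2)^2\, x_1^k + \alpha_k^2\, Lq/2.
\eeqs
Starting from $x_1^1>0$ keeps every iterate positive, so $\dist_k = x_1^k$. Writing the solution of this scalar recursion as a convolution of the forcing $\alpha_k^2 Lq/2$ against the products $\prod_l(1-\alpha_l L/2)^2$ and comparing against the quasi-fixed point $v_k := q\alpha_k/2$, a Chung-lemma type argument gives $\dist_k = (q/2)\alpha_k + o(\alpha_k)$. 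Invoking Part 1, there are infinitely many $k$ with $k\alpha_k\geq \underline b/2$, and along this subsequence $\dist_k \geq q\underline b/(8k)$, which is precisely $\dist_k\geq\tilde\Omega(1/k)$.

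\textbf{Expected difficulty.} Part 1 is the delicate step: the contradiction hinges on the dependence of $\bar b$ on the problem parameter $G=LD$, and one must verify that $\bar b$ cannot grow fast enough in $D$ to absorb the polynomial blow-up $k^{1-\bar b+L\epsilon}$. Part 2 is routine fixed-point analysis once the non-cancelling instance is in hand; the main subtlety there is justifying $\dist_k\sim(q/2)\alpha_k$ without any a priori regularity on $\alpha_k$, which is why Part 1 must be applied only along a subsequence rather than to all $k$.
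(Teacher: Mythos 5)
Your choice of test instances is exactly the paper's: a single scalar quadratic $\bar f=\frac{L}{2}x^2$ for Part 1, and two scalar quadratics whose gradient errors do not cancel at $x^*$ for Part 2 (the paper uses $\frac{L}{2}(x\mp 1)^2$; your $\frac{L}{4}x^2\pm qx$ is an equivalent variant that actually stays inside $\Cmnsub$). However, the analytic routes diverge and both contain gaps that the paper's version avoids.

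\textbf{Part 1.} The paper does not estimate the product by Gamma/Stirling asymptotics; instead it passes to logarithms, uses $-\ln(1-z)\le 2z$ to deduce $\sum_{j\le k}\alpha_j\ge \underline b\log k$, and then contradicts the assumption $\limsup_k k\alpha_k<\underline b$, which would force $\sum_{j\le k}\alpha_j\le\delta\log k+O(1)$ with $\delta<\underline b$. Your version reaches the lower bound $k\,\dist_k\ge c_0 D\,k^{1-\mu}$ with $\mu=\bar b-L\epsilon$ and then tries to win by ``choosing $D$ large.'' This does not close the argument: the sign of the exponent $1-\mu$ is what decides the contradiction, not the constant $c_0D$. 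When $1-\mu>0$ the right side diverges and any fixed $D>0$ already contradicts $\limsup_k k\,\dist_k<\infty$, so ``$D$ large'' is unnecessary; when $1-\mu\le 0$ the right side stays bounded or tends to $0$ and no choice of $D$ helps, because $\bar b$ is permitted to depend on $G=LD$ and you have no quantitative control over that dependence. In short, the Gamma-asymptotic route gives you only $\limsup_k k\alpha_k\ge 1/L$ cleanly, and you cannot boost it to $\bar b/L$ by scaling the initial point.

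\textbf{Part 2.} Your outer recursion $x_1^{k+1}=(1-\alpha_kL/2)^2x_1^k+\alpha_k^2 Lq/2$ is correct and the quasi-fixed-point heuristic $\dist_k\approx(q/2)\alpha_k$ is the right intuition, but the step ``$\dist_k=(q/2)\alpha_k+o(\alpha_k)$ via a Chung-lemma argument'' is where the proof breaks. Chung's lemma gives upper bounds; a matching lower bound of the form $\dist_k\gtrsim\alpha_k$ requires that $x_1^k$ tracks the moving fixed point, which needs regularity of $\alpha_k$ (e.g. $\alpha_{k+1}/\alpha_k\to1$) that the theorem does not grant. The trivial lower bound from $x_1^{k+1}\ge\alpha_k^2 Lq/2$ only gives $\Omega(1/k^2)$, not $\Omega(1/k)$. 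The paper avoids this entirely: writing $y^k=x_1^k+x_2^k$, the two forcing terms cancel and $y^{k+1}=(1-\bar\alpha_k)^2y^k$, so $|y^k|=O(1/k^2)$ follows from the \emph{upper} bound of Part 1 applied to the product; then the exact within-cycle identity $x_1^k=\tfrac12\bigl(y^k+\bar\alpha_k(x_1^k-1)\bigr)$ directly yields $|x_1^k|\ge\tfrac{\bar\alpha_k}{2}|x_1^k-1|-\tfrac{|y^k|}{2}$, which is $\tilde\Omega(1/k)$ along the subsequence from Part 1. No fixed-point approximation or regularity of $\alpha_k$ is needed. You would strengthen Part 2 considerably by replacing the Chung-type tracking argument with the analogous telescoping identity for your instance.
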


\begin{proof} \
	\begin{itemize}
			\item [$(i)$] We follow the approach of \cite[Appendix A]{Rakhlin:2011} which was introduced to justify the optimality of the $\Theta(1/k)$ stepsize for the stochastic gradient descent method. Consider the simple example $\bar f(x) =\bar f_1(x) = \frac{L}{2}x^2 \in \Cmnsubinf$ with only one component function in dimension one ($m=n=1$) or a similar alternative example $\bar f(x) =\frac{L}{2}\big(x(1)^2+x(2)^2\big) \in \Cmnsubinf$ with two component functions $\bar f_1(x)=\frac{L}{2}x(1)^2$ and $\bar f_2(x)=\frac{L}{2}x(2)^2$ in dimension two ($n=m=2$) where $x(\ell)$ denotes the $\ell$-th coordinate of the vector $x$. In any of these two examples, IG becomes the classical gradient descent method leading to the iterations $x_1^{k+1}=\prod_{j=1}^k(1 - \alpha_j L) x_1^1$. By the assumption, we have at least
   \beq\label{neces-cond-for-1overk} \bigg | \prod_{j=1}^k (1 - \alpha_j L) \bigg | \leq \frac{\bar b}{k} + o(\frac{1}{k}) \leq \frac{2 \bar b}{k} \quad \mbox{for } k \mbox{ large}
   \eeq
and $\alpha_k \to 0$ (otherwise simple examples show the global convergence may not happen from an arbitrary initial point). %and for an arbitrary initial point $x_1^1$. %We could also assume that $\sup_k \alpha_k = a_{\max}< 1$ (otherwise we can normalize the functions $f_i$ with $a_{\max}$ as in Example \ref{exam-quad-dim-one}), therefore $1-\alpha^j > 0$ for all $j$, i.e. the left-hand side of \eqref{neces-cond-for-1overk} cannot vanish.  
By taking the natural logarithm of both sides, this is equivalent to requiring
  $$ \sum_{j=1}^k -\ln | 1-\alpha^j L | \geq 2 \bar b \log k \quad \mbox{for } k \mbox{ large}. $$
Using $ 2z \geq - \ln(1-z) $ for $0\leq z \leq \frac{1}{2}$, it follows that  
	\beq \sum_{j=1}^k \alpha_j \geq \underline{b} \log k\label{ineq-to-contradict}
	\eeq  
when $k$ is large enough. Assume there exists $\delta$ such that $ \limsup_{k\to\infty} k\alpha_k < \delta < \underline{b}  $. Then, by definition of the limit superior, we have $\alpha_k \leq \frac{\delta }{k}$ for any $k$ large enough. By summing this inequality over the iterations $k$, we obtain $\sum_{j=1}^k \alpha_j \leq \delta \log(k) + b_2$ for a constant $b_2$ and for any $k$ large enough. This contradicts with \eqref{ineq-to-contradict}. Therefore, no such $\delta$ exists, i.e. $\limsup_{k\to\infty} k \alpha_k \geq \underline{b}$. This completes the proof. 
		\item [$(ii)$] Consider the following simple example with two quadratics $\bar{f} = \bar{f_1} + \bar{f}_2$ with $\bar{f}_1(x) = \frac{L}{2}(x-1)^2$ and $\bar{f}_2(x) = \frac{L}{2}(x+1)^2$ in dimension one ($m=2$, $n=1$). Then, applying IG with an initial point ${x}_1^1 \in \R$ results in the iterates $\{{x}_1^k, {x}_2^k\}$ with
  \beqa 
            {x}_2^k  &=& {x}_1^k - \bar{\alpha}_k ({x}_1^k - 1), \label{x2k-evolution-example}\\
            {x}_1^{k+1} &=& (1-\bar{\alpha}_k)^2 {x}_1^k - (\bar{\alpha}_k)^2,  \label{x1k-stays-positive}\\
            {x}_2^{k+1} &=& (1-\bar{\alpha}_k)^2 {x}_2^k + (\bar{\alpha}_k)^2 \label{x2k-stays-positive},
   \eeqa
where $\bar{\alpha}_k = \alpha_k L$ is the normalized stepsize. Define $y^k = {x}_1^k + {x}_2^k$. By summing up \eqref{x1k-stays-positive} and \eqref{x2k-stays-positive}, we see that
     \beq\label{yk-evolution} y^{k+1} = (1-\bar{\alpha}_k)^2 y^k = \prod_{j=1}^k (1-\bar{\alpha}_j)^2 y^1.
     \eeq
%We can assume without loss of generality that $\sup_k \alpha_k < 1/2$ (otherwise we can normalize the function $f_i$ as discussed in Example \ref{exam-quad-dim-one}). Choose $\tilde{x}_1^1$ such that both $\tilde{x}_1^1 >0$ and $\tilde{x}_2^1 > 0$, for example one can set $\tilde{x}_1^1 = 2 \alpha^1>0$ which implies $\tilde{x}_1^2 = \alpha^1 (1-2\alpha^1) > 0$. Then, $y_1^1 = \tilde{x}_1^1 + \tilde{x}_2^1 > 0$. Furthermore, by \eqref{x1k-stays-positive} and \eqref{yk-evolution}, it follows that both $\tilde{x}_1^k > 0$ and $y^k >0$ for all $k\geq 1$. 
By the necessary condition \eqref{neces-cond-for-1overk}, we also have
       \beq 0 \leq |y^k| \leq \bigO(1/k^2). \label{yk-conv-speed}
       \eeq
Finally, plugging  $y^k = {x}_1^k + {x}_2^k$ into \eqref{x2k-evolution-example}, we obtain
   $${x}_1^k = \frac{y^k}{2} + \bar{\alpha}_k \frac{({x}_1^k - 1)}{2}. $$   
As $\alpha_k = \tilde{\Omega}(1/k)$ by part $(i)$ and ${x}_1^k$ is converging to zero, it follows from \eqref{yk-conv-speed} and the triangle inequality that
   $$ |{x}_1^k| = \dist_k \geq  \bar{\alpha}_k \frac{| {x}_1^k - 1|}{2} - \frac{|y^k|}{2} =\tilde{\Omega}(1/k).     $$ 
This completes the proof.   
%As the order is cyclic, applying IG to the functions $f_1 = \tilde{f_2}$ and $f_2 = \tilde{f}_1$ leads to the iterations $\{x_1^k\}$ with $x_1^k = \tilde{x}_2^k$. Therefore, $ |x_1^k| = \dist_k \geq \tilde{\Omega}(1/k)$.
%Note that $\{\tilde{x}_j^k\}$ are the IG iterates obtained by processing the component functions in order $f_1, f_2, f_1, f_2, \dots$. If we set $x_1^k = \tilde{x}_2^k$ and $x_2^{k} = \tilde{x}_2^{k+1}$ then $\{x_1^k, x_2^k\}$ corresponds to IG iterates where functions are processed in order $f_2, f_1, f_2, f_1 \dots$ instead and satisfy 
%    $$ \dist_k = \| x_1^k - x^* \| = \|\tilde{x}_2^k \| \geq \tilde{\Omega}(1/k). $$  
	\end{itemize}

\end{proof}

%\begin{rema} If there is some structural assumption on the stepsize, one can also strengthen the results of Theorem \ref{theo-lower-bound-ig}. For instance, if the stepsize is $\alpha_k = \Theta(1/k^s)$ with $s \in (0,1]$, one can also construct similar examples with $\dist_k \geq \Omega(1/k^s)$.
%\end{rema}
\subsubsection{Lower bounds for stepsize $\alpha_k = R/k^s$ with $s\in[0,1]$} In this section, we are interested in the number of cycles necessary with IG to reach to an $\varepsilon$-neighborhood of an optimal solution using the stepsize $\alpha_k = R/k^s$ with $s\in[0,1]$. As before, we consider the case when the  component functions are smooth and their sum is strongly convex. 

When $s=0$, the stepsize $\alpha_k = \alpha$ is a constant and there exists simple examples (with two quadratics in dimension one) which necessitate $\Omega\big(\log(1/\varepsilon)/\varepsilon\big)$ cycles to reach to an $\varepsilon$-neighborhood of an optimal solution (see \cite[Proposition 2.2]{Luo1991convergence}). Therefore, it can be argued that the dependancy on $\varepsilon$ of the iteration complexity in Corollary \ref{coro-ig-constant-complexity} cannot be improved further. %when component functions are smooth ($C_2$) and strongly convex. 

For the special case $s \in (1/2,1]$, Luo gives an analysis which suggests that one would expect to have $\dist_k = \Omega (1/k^s)$ for least square problems (see 
\cite[Remark 2, after the proof of Theorem 3.1]{Luo1991convergence}). We next provide a rigorous lower bound for the more general case $s\in(0,1)$. Specifically, the simple example given in the proof of part $(ii)$ of Theorem \ref{theo-lower-bound-ig} provides a lower bound for $s \in (0,1]$ by an analysis almost identical to the proof of part $(ii)$ of Theorem \ref{theo-lower-bound-ig} leading to the following result: 

\begin{lemma}\label{lemma-lower-bd-s} Consider the iterates $\{x_1^k\}$ generated by the IG method with decaying stepsize $\alpha =R/k^s$ where $s\in (0,1]$ applied to quadratic component functions $f_i$ whose sum $f$ is strongly convex. Then, $\dist_k =\Omega(1/k^s)$.
\end{lemma}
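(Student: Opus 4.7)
The plan is to invoke the two-quadratic example already exhibited in the proof of part $(ii)$ of Theorem \ref{theo-lower-bound-ig}: take $m=2$, $n=1$, $f_1(x) = \frac{L}{2}(x-1)^2$ and $f_2(x) = \frac{L}{2}(x+1)^2$, for which the sum $f(x) = L(x^2+1)$ lies in $\Cmnsub$ with unique minimizer $x^* = 0$. Starting IG from $x_1^1 = 0$ and writing $\bar\alpha_k := \alpha_k L = RL/k^s$, the one-dimensional recursion \eqref{x1k-stays-positive},
$$ x_1^{k+1} = (1-\bar\alpha_k)^2 x_1^k - \bar\alpha_k^2, $$
applies verbatim for every $s \in (0,1]$, and a one-line induction yields $x_1^k \leq 0$ for every $k \geq 1$. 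Setting $u^k := \dist_k = -x_1^k \geq 0$, the recursion becomes
$$ u^{k+1} = (1-\bar\alpha_k)^2 u^k + \bar\alpha_k^2, \quad u^1 = 0, $$
and unrolling gives the explicit formula $u^{k+1} = \sum_{j=1}^k \bar\alpha_j^2 \prod_{i=j+1}^k (1-\bar\alpha_i)^2$.

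From here the strategy is to lower bound this sum by a constant multiple of $1/k^s$ by restricting to a carefully chosen window $J_k \subset \{1,\ldots, k\}$ on which each summand is of order $\bar\alpha_k^2$ and the tail product is bounded below by a positive constant. The elementary inequality $(1-x)^2 \geq e^{-4x}$ (valid for $x \in [0,1/2]$) gives $\prod_{i=j+1}^k (1-\bar\alpha_i)^2 \geq \exp\bigl(-4 \sum_{i=j+1}^k \bar\alpha_i\bigr)$ once $k$ is large enough that $\bar\alpha_k \leq 1/2$. For $s \in (0,1)$, taking $J_k = \{k - \lceil k^s \rceil, \ldots, k\}$ keeps $\sum_{i \in J_k}\bar\alpha_i$ uniformly bounded (an integral comparison shows $\sum_{i \in J_k}\bar\alpha_i \to RL$) while providing $\Theta(k^s)$ summands of size at least $\bar\alpha_k^2/2 = \Theta(1/k^{2s})$; multiplying these together gives $u^{k+1} = \Omega(1/k^s)$. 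For the boundary case $s=1$, the window $J_k = \{\lceil k/2 \rceil, \ldots, k\}$ works: $\sum_{i \in J_k}\bar\alpha_i \leq RL\log 2$ is bounded and the $\Theta(k)$ summands of size $\Theta(1/k^2)$ aggregate to $\Omega(1/k)$.

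The only technical wrinkle is the case split between $s<1$ and $s=1$, due to the different growth rates of $\sum_i 1/i^s$. In either regime the final constant in the $\Omega$ bound depends on $R$ and $L$ through an $e^{-4RL}$-type factor but remains strictly positive. As the remark preceding the lemma indicates, the argument is essentially identical to that for part $(ii)$ of Theorem \ref{theo-lower-bound-ig}; the only gain is that because the stepsize $R/k^s$ is prescribed at every $k$, the lower bound now applies for \emph{all} $k$ large, yielding the stronger $\Omega$ (rather than $\tilde\Omega$) conclusion.
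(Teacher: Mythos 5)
Your proof is correct and uses the same two-quadratic example the paper points to, but the way you extract the $\Omega(1/k^s)$ bound is genuinely different from the paper's. The proof of Theorem~\ref{theo-lower-bound-ig}(ii), which Lemma~\ref{lemma-lower-bd-s} merely references, introduces the decoupling variable $y^k = x_1^k + x_2^k$, which obeys the homogeneous recursion $y^{k+1} = (1-\bar\alpha_k)^2 y^k$; the lower bound then drops out of the identity $x_1^k = y^k/2 + \bar\alpha_k(x_1^k-1)/2$ once $|y^k|$ is shown to be dominated by $\bar\alpha_k$. You instead pin down the initial point $x_1^1 = 0$, unroll the inhomogeneous scalar recursion for $u^k = -x_1^k$ into the Duhamel sum $u^{k+1} = \sum_{j\le k}\bar\alpha_j^2\prod_{i>j}(1-\bar\alpha_i)^2$, and lower-bound that sum over a window of indices near $k$ where each summand is $\Theta(\bar\alpha_k^2)$ and the tail product is bounded below via $(1-x)^2 \ge e^{-4x}$. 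The paper's route is terser and recycles the product estimate from part~(i); your route is more elementary and self-contained, and has the advantage that it sidesteps the need to argue separately that $|y^k|/\bar\alpha_k$ stays bounded away from $1$, which for $s=1$ and small $R$ is not immediate from the product bound alone. Both correctly upgrade the $\tilde\Omega$ of Theorem~\ref{theo-lower-bound-ig} to a full $\Omega$ because the stepsize is now prescribed pointwise. One small slip in a side remark: $f(x) = L(x^2+1)$ has Hessian $2L$, so it does not lie in $\Cmnsub$ as defined (which caps the Hessian at $L$); this is immaterial here since the lemma only needs strong convexity of the sum, but a rescaling of the two component functions by $1/2$ would be needed for literal membership.
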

\todo{\hfill State like \\ \hfill $\varepsilon^{1/s}$ instead?}

This lower bound is based on an example in dimension one. However, one can also construct similar examples in higher dimensions. In Appendix \ref{sec-appendix-lower-bound}, we provide an alternative example in dimension two for illustrating this fact, although it is not as simple as the dimension one example.

\section{Convergence rate analysis for IN}

To analyze the gradient errors introduced in the IN iterations \eqref{eq-inner-update-2}--\eqref{eq-hessian-update}, we rewrite the outer IN iterations using \cite[Equation 2.12]{GurOzPa15} as: 
\beq\label{eq-iter-newton-via-error} x_1^{k+1} = x_1^k - \alpha_k (\bar{H}_m^k)^{-1}(\nabla f(x_1^k) + e_g^k ) %\\
        %&=& x_1^k - \alpha_k k({H}_m^k)^{-1}(\nabla f(x_1^k) + e_g^k ) 
\eeq
where%\footnote{Note that in our notation we consider the normalized matrices $\bar H_m^k = H_m^k/k$. On the other hand, \cite{GurOzPa15} works with the non-normalized matrices $H_m^k$.}
\beq\label{eq-grad-error-alg3}
e_g^k = \sum_{j=1}^m \bigg(\nabla f_j (x_j^k)-\nabla f_j (x_1^k) + \frac{1}{\alpha_k k} \nabla^2 f_j(x_j^k)(x_1^k - x_j^k) \bigg)
\eeq
is the gradient error and  
\beq  \bar{H}_m^k =  \frac{H_0^1 + \sum_{i=1}^k  \sum_{j=1}^m \nabla^2 f_j(x_j^i)  }{k} = \frac{ \sum_{i=1}^k  \nabla^2 f(x_1^i)  }{k} + e_h^k\label{eq-incr-newton-iter-identities}
\eeq
is an averaged Hessian up to an error term
\beq  e_h^k = \frac{H_0^1 + \sum_{i=1}^k  \sum_{j=1}^m \bigg( \nabla^2 f_j(x_j^i) - \nabla^2 f_j(x_1^i)\bigg) }{k}. \label{eq-incr-newton-hessian-error}
\eeq

\noindent We let $\alpha_k = R/k$ and introduce the following norm: 
	\beq \|z\|_{*} := \big(z^T H_* z \big)^{1/2}, \quad z \in \R^n \quad \mbox{where} \quad H_* := \nabla^2 f(x^*) .
		\label{def-star-norm-H-star}
	\eeq
which arises in the analysis of the self-concordant functions and Newton's method \cite{nesterov2004introductory}.  The next theorem shows that unlike IG, IN can achieve the $\bigO(1/k)$ rate without requiring to know or estimate the strong convexity constant of $f$. Furthermore, the constants arising in IN when considered in the $*$-norm do not depend on the Lipschitz constant $L$ unlike IG. %In contrast to IG, IN can achieve the fastest $O(1/k)$ rate with any $R>1$ without the necessity to adjust $R$ to the strong convexity constant.
\begin{theo}\label{theo-incr-newt-no-cond-number} Let $f_i$ be component functions satisfying Assumptions \ref{assump-sum-is-str-cvx} and \ref{assum-C2}. Consider the the iterates $\{x_1^k, \dots, x_m^k\}$ generated by the IN method with stepsize $\alpha_k = R/k$ where $R>1$. Assume that the iterates are uniformly bounded. Then, we have
\beq
   \limsup_{k\to\infty} k \|x_1^k - x_* \|_* \leq \frac{ B R(R+1)}{R-1}
\eeq
where $ \| \cdot \|_*$ and $H_* $  are defined by \eqref{def-star-norm-H-star} and $B= \sum_{i=1}^m \|H_*^{-1/2} \nabla f_i(x^*) \| \leq G/\sqrt{c}$ where $G$ is defined by \eqref{def-G}.
\end{theo}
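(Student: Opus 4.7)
My plan is to analyze the iteration \eqref{eq-iter-newton-via-error} in the $*$-norm and reduce it to a scalar recursion of the form handled by Chung's lemma. The starting point is to write $\nabla f(x_1^k) = A_k(x_1^k - x^*)$ with $A_k = \int_0^1 \nabla^2 f(x^* + \tau (x_1^k - x^*))\,d\tau$, so that \eqref{eq-iter-newton-via-error} becomes
\beqs
x_1^{k+1} - x^* = \bigl(\Idn - \alpha_k (\bar{H}_m^k)^{-1} A_k\bigr)(x_1^k - x^*) - \alpha_k (\bar{H}_m^k)^{-1} e_g^k.
\eeqs
Pre-multiplying by $H_*^{1/2}$ and taking Euclidean norms yields
\beqs
\|x_1^{k+1} - x^*\|_* \leq \bigl\|\Idn - \alpha_k H_*^{1/2}(\bar{H}_m^k)^{-1} A_k H_*^{-1/2}\bigr\|\,\|x_1^k - x^*\|_* + \alpha_k \bigl\|H_*^{1/2}(\bar{H}_m^k)^{-1} e_g^k\bigr\|.
\eeqs

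The next step is to show that both averaged matrices $\bar{H}_m^k$ and $A_k$ converge to $H_*$. Convergence of $A_k$ is immediate from $x_1^k \to x^*$ (which holds under the given assumptions by the convergence theory in \cite{GurOzPa15}) and continuity of $\nabla^2 f$ on the compact set $\X$. For $\bar{H}_m^k$, the identity \eqref{eq-incr-newton-iter-identities} expresses it as a Cesaro average of $\nabla^2 f$ along the trajectory plus an error $e_h^k$ that vanishes by \eqref{eq-incr-newton-hessian-error} and continuity. Hence $H_*^{1/2}(\bar{H}_m^k)^{-1} A_k H_*^{-1/2} \to \Idn$ in operator norm, and for every $\eta>0$ the contraction factor is bounded by $1 - (1-\eta)\alpha_k$ for $k$ large enough.

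The heart of the argument is bounding the gradient error. Using formula \eqref{eq-grad-error-alg3} together with $\alpha_k k = R$, I would rewrite
\beqs
e_g^k = \sum_{j=1}^m \bigl(\nabla f_j(x_j^k) - \nabla f_j(x_1^k)\bigr) + \tfrac{1}{R}\sum_{j=1}^m \nabla^2 f_j(x_j^k)(x_1^k - x_j^k),
\eeqs
bound the first sum using Lipschitz continuity of $\nabla f_j$ and the second by the spectral bound $\|\nabla^2 f_j\| \leq L_j$, and then propagate the inner-step identity $x_j^k - x_1^k = -\alpha_k \sum_{i=1}^{j-1}(\bar{H}_i^k)^{-1}\nabla f_i(x_i^k)$. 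Each summand then carries an explicit factor of $\alpha_k = R/k$; once I pass through $H_*^{1/2}(\bar{H}_m^k)^{-1}$ on the left and use $\bar{H}_i^k \to H_*$ and $\nabla f_i(x_i^k) \to \nabla f_i(x^*)$, the asymptotic size of each contribution is controlled by $\|H_*^{-1/2}\nabla f_i(x^*)\|$. Collecting the two terms, which contribute factors $1$ and $1/R$ respectively and do not cancel under the triangle inequality, I expect
\beqs
\limsup_{k\to\infty}\,k\,\bigl\|H_*^{1/2}(\bar{H}_m^k)^{-1} e_g^k\bigr\| \leq (R+1)\,B,
\eeqs
with $B = \sum_{i=1}^m \|H_*^{-1/2}\nabla f_i(x^*)\|$ arising naturally from the accumulated inner gradients.

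Putting the pieces together, for every sufficiently small $\eta,\delta>0$ and all $k$ large enough, I obtain
\beqs
\|x_1^{k+1} - x^*\|_* \leq \Bigl(1 - \tfrac{(1-\eta)R}{k}\Bigr)\|x_1^k - x^*\|_* + \tfrac{R(R+1)B + \delta}{k^2}.
\eeqs
Since $R > 1$, I can pick $\eta$ so that $(1-\eta)R > 1$ and apply Lemma~\ref{lem-chung-1}(i) with $s=1$, $a=(1-\eta)R$, $t=1$, $d = R(R+1)B + \delta$; this yields $\limsup_k k\|x_1^k - x^*\|_* \leq (R(R+1)B+\delta)/((1-\eta)R-1)$, and sending $\eta,\delta \downarrow 0$ gives the theorem. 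The bound $B\leq G/\sqrt{c}$ then follows from $H_*^{-1} \preceq c^{-1}\Idn$ and \eqref{def-G}. The main obstacle I anticipate is the uniform-in-$i$ control of the convergences $\bar{H}_i^k \to H_*$ and $x_i^k \to x^*$ required to justify the precise asymptotic constant $(R+1)B$, rather than a coarser expression involving the per-component Lipschitz constants $L_j$; this hinges on exploiting the fact that the intra-cycle drift $\|x_j^k - x_1^k\|$ is already $O(1/k)$, so that second-order Taylor corrections in the Hessian products contribute only $o(1/k)$ to the error.
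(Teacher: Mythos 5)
Your overall plan is the paper's argument written without explicit coordinates: pre-multiplying by $H_*^{1/2}$ and taking Euclidean norms is exactly what the change of variables $y=H_*^{1/2}x$, $\hf(y)=f(x)$ accomplishes, since $\|H_*^{1/2}(\bar H_m^k)^{-1}A_k H_*^{-1/2}\| = \|\bar D_k^{-1}Y_k\|$ and $\|H_*^{1/2}(\bar H_m^k)^{-1}e_g^k\| = \|\bar D_k^{-1}e_y^k\|$ in the paper's notation. The contraction estimate, the Cesaro-average/vanishing-$e_h^k$ argument for $\bar H_m^k\to H_*$, and the final step via Lemma~\ref{lem-chung-1} with $s=1$, $a>1$, $t=1$ and $\eta,\delta\downarrow 0$ all match the paper's Steps 1 and 3.

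The gap is in Step 2, and it is exactly where you suspect trouble. If you ``bound the first sum using Lipschitz continuity of $\nabla f_j$ and the second by the spectral bound $\|\nabla^2 f_j\|\le L_j$,'' the constants $L_j$ enter irrevocably: after the triangle inequality you only control $\|x_j^k-x_1^k\|$ by a scalar, and there is no way to re-insert the sandwiching $H_*^{\pm 1/2}$ inside the product to recover $\|H_*^{-1/2}\nabla f_i(x^*)\|$. The resulting bound would be of the form $CL\sum_i\|H_*^{-1}\nabla f_i(x^*)\|$, which depends on the Lipschitz/strong-convexity ratio, not $(R+1)B$. Your diagnosis that ``second-order Taylor corrections contribute only $o(1/k)$'' is true but is not the missing ingredient. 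What the proof actually needs is to replace the Lipschitz bound by the exact mean-value factorization $\nabla \hf_j(y_j^k)-\nabla \hf_j(y_1^k)=Y_{k,j}(y_j^k-y_1^k)$ with $Y_{k,j}=\int_0^1\nabla^2\hf_j(y_1^k+\tau(y_j^k-y_1^k))\,d\tau$, and then to observe that $Y_{k,j}\to\nabla^2\hf_j(y^*)=H_*^{-1/2}\nabla^2 f_j(x^*)H_*^{-1/2}$, and crucially that these limits are PSD matrices summing to $\nabla^2\hf(y^*)=\Idn$, hence each satisfies $\nabla^2\hf_j(y^*)\preceq \Idn$ (this is \eqref{A-yjk-limit} in the paper). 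That normalization, not a Taylor remainder estimate, is what eliminates $L$ and yields the factors $1$ and $1/R$ that combine into $(R+1)B$ after the term-by-term limit
\[
\bar D_k^{-1}Y_{k,j}(\bar D_\ell^k)^{-1}\nabla\hf_\ell(y_\ell^k)\;\longrightarrow\;\nabla^2\hf_j(y^*)\,\nabla\hf_\ell(y^*),
\]
followed by $\bigl\|\sum_{j>\ell}\nabla^2\hf_j(y^*)\bigr\|\le\|\nabla^2\hf(y^*)\|=1$. Inserting this replaces your heuristic and makes the estimate $\limsup_k k\,n_k\le (R+1)B$ rigorous; the rest of your argument then goes through unchanged.
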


The proof of this Theorem is given in the Appendix \ref{sec-appendix-proof-newton}. The main idea is to change variables $y=H_*^{1/2}x$ and analyze the corresponding iterates $y_1^k = H_*^{1/2}x_1^k$. By this change of variables, it can be shown that $\{y_1^k\}$ follows a similar recursion to the IN iterates $\{x_1^k\}$ converging to $y_* = H_*^{1/2}x_*$. Then, one can analyze how fast the sequence $\|y_1^k - y_*\| = \| x_1^k - x_*\|_*$ decays to zero by exploiting the fact that $y$-coordinates have the advantage that the local strong convexity constant and the local Lipschitz constant of $f$ around $y_*$ are both equal to one due to the normalization obtained by this change of variable.

\section{Conclusion}
We analyzed the convergence rate of the IG and IN algorithms when the component functions are smooth and the sum of the component functions is strongly convex. This covers the interesting case of many regression problems including the $\ell_2$ regularized regression problems. For IG, we show that the distance of the iterates converges at rate $\bigO(1/k^s)$ to the optimal solution with a diminishing stepsize of the form $\alpha_k = \bigO(1/k^s)$ for $s \in (0,1]$. This improves the previously known $\bigO(1/\sqrt{k})$ rate (when $s \in (1/2,1]$) and translates into convergence at rate $\bigO(1/k^{2s})$ of the suboptimality of the objective value. For constant stepsize, we also improve the existing iteration complexity results for IG from $\bigO(\frac{\log(1/\varepsilon)}{\varepsilon^2})$ to $\bigO(\frac{\log(1/\varepsilon)}{\varepsilon})$ to reach to an $\varepsilon$-neighborhood of an optimal solution. In addition, we provided lower bounds showing that these rates cannot be improved using IG.

Achieving the fastest $\bigO(1/k)$ rate in distances with IG requires a good knowledge or approximation of the strong convexity constant of the sum function $f$. However, we showed that IN as a second-order method, can achieve this fast rate without the knowledge of the strong convexity constant.  

\appendix 
\section{Proof of Theorem \ref{theo-incr-newt-no-cond-number}}\label{sec-appendix-proof-newton} 
\begin{proof}
By a change of variable let $y=H_*^{1/2}x$ and define $\hf(y) = f(x) $. Consider the IN iterates in the $y$-coordinates.  By the chain rule, we have
   \beqa \nabla f(x) = H_*^{1/2}\nabla \hf(y), \quad
   \nabla^2 f(x) = H_*^{1/2}\nabla^2 \hf(y) H_*^{1/2}. \label{change-of-variable}
   \eeqa
%where $\nabla_x$ ($\nabla_x^2$) and $\nabla_y$ ($\nabla_y^2$) denotes the gradient (Hessian) with respect to $x$ and $y$ coordinates respectively.
 Using these identities, the IN iterations \eqref{eq-inner-update-2}--\eqref{eq-hessian-update} become
\begin{equation}\label{eq-inner-update-2bis} y_{i+1}^k := y_i^k - \alpha_k (\bar{D}_{i}^k)^{-1} \nabla \hf_i (y_i^k), \quad i=1,2,\dots,m, 
       \end{equation}
       where $\bar{D}_i^k = D_i^k/k$ with 
       \begin{equation}\label{eq-hessian-update-bis} y_i^k = H_*^{1/2}x_i^k, \quad D_{i}^k := D_{i-1}^k + \nabla^2 \hf_i(y_i^k) = H_*^{-1/2} H_i^k H_*^{-1/2}.
       \end{equation}    
Furthermore, the IN method is globally convergent under these assumptions (see \cite{GurOzPa15}), i.e. $x_1^k \to x^*$. More generally, due to the cyclic structure, we have also $x_i^k \to x^*$ for each $i=1,2,\dots,m$. Then, from the Hessian update formula \eqref{eq-hessian-update}, it follows that $\bar{H}_i^k \to H_*$ for each $i=1,2,\dots,m$ fixed and
    \beq \bar{D}_i^k \to \nabla^2 \hf(y^*) = H_*^{-1/2} H_* H_*^{-1/2} = \Idn, \quad i=1,2,\dots,m, \label{averaged-Dik-converges}
    \eeq
where we used the second change of variable identity from \eqref{change-of-variable} to calculate $\nabla^2 \hf(y^*)$.       Comparing the IN iterations \eqref{eq-inner-update-2} in the $x$-coordinates and the IN iterations \eqref{eq-inner-update-2bis} in the $y$-coordinates, we see that they have exactly the same form, the only differences are that in the latter the gradients and the Hessian matrices are taken with respect to $y$ (instead of $x$) and $f$ is replaced with $\hf$. Therefore, the inequalities \eqref{eq-iter-newton-via-error}--\eqref{eq-grad-error-alg3} hold if we replace $f$ with $\hf$ and $x_j^i$ with $y_j^i$ leading to \beqa\label{eq-iter-newton-via-error-bis} y_1^{k+1} = y_1^k - \alpha_k (\bar{D}_k)^{-1}(\nabla f(y_1^k) + e_y^k ), \quad \mbox{where} \quad \bar{D}_k := \bar{D}_m^k,  
\eeqa
and the gradient error becomes
\beq\label{eq-grad-error-alg3-bis}
e_y^k = \sum_{j=1}^m \bigg(\nabla \hf_j (y_j^k)-\nabla \hf_j (y_1^k) + \frac{1}{R} \nabla^2 \hf_j(y_j^k)(y_1^k - y_j^k) \bigg)
\eeq
where we set $\alpha_k =R/k$. Setting $\nabla \hf(y_1^k) =  Y_k (y_1^k - y^*)$ in \eqref{eq-iter-newton-via-error-bis} with an averaged Hessian 
	\beq Y_k = \int_0^1 \nabla^2 \hf(y^*+\tau (y_1^k-y^*)) d\tau
	    \label{def-Ayk}
	\eeq
where $y^* = H_*^{1/2}x_*$ and using the triangle inequality we obtain
\beq
\|y_1^{k+1} - y^*\| \leq \underbrace{\big\| \big(\Idn- \frac{R}{k} \bar{D}_k^{-1} Y_k \big) (y_1^k - y^*) \big\|}_{\text{$:=m_k$}} + \frac{R}{k} \underbrace{ \| (\bar{D}_k)^{-1} e_y^k\|}_{\text{$:=n_k$}}. \label{dist-iter-in-y}
\eeq
The remaining of the proof consists of estimating the terms $m_k$ and $n_k$ on the right-hand side separately in the following three steps which gives the  desired convergence rate of the left-hand side $\|y_1^{k+1} - y^*\| = \| x_1^{k+1} - x^*\|_*$:  
\begin{itemize}
   \item [{$\mbox{Step 1}$:}] (Bounding $m_k$) We first observe that
\beq 
m_k^2 = \bigg\| \big(\Idn- \frac{R}{k} \bar{D}_k^{-1} Y_k \big) (y_1^k - y^*) \bigg\|^2 = (y_1^k - y^*)^T S_k (y_1^k - y^*)
\eeq
where 
 \beq S_k = \Idn - \frac{R}{k} Z_k, \quad  Z_k = Y_k \bar{D}_k^{-1} + \bar{D}_k^{-1}Y_k - \frac{R}{k} Y_k \bar{D}_k^{-2}Y_k. \label{def-Mk-Sk}
 \eeq
From \eqref{averaged-Dik-converges}, we have  $\bar{D}_k = \bar{D}_m^k \to  \Idn$. Furthermore, as $y_1^k$ converges to $y^*$ by the global convergence of IN, $Y_k$ defined in \eqref{def-Ayk} converges to $\nabla_y^2 f(y^*) = \Idn$ as well. Therefore, we have $Z_k = 2\Idn + o(1)$ in \eqref{def-Mk-Sk} which leads to 
   $$S_k = (1- \frac{2R}{k})\Idn + o(\frac{1}{k}). $$
Then, for every $\varepsilon \in (0,1) $, there exists a finite $k_1=k_1(\varepsilon)$ such that for $k \geq k_1(\varepsilon)$, 
   $$S_k \preceq \bigg(1-\frac{2R (1- \varepsilon)}{k}\bigg)\Idn  $$
and therefore 
\beqas
m_k^2 = (y_1^k - y^*)^T S_k (y_1^k - y^*) 
%\leq \bigg(1-\frac{2R(1-\varepsilon)}{k}\bigg)(y_1^k - y^*)^T I (y_1^k - y^*) \\ 
   \leq \bigg(1-\frac{2R(1-\varepsilon)}{k}\bigg) \| y_1^k - y^*\|^2
\eeqas
for $k\geq k_1(\varepsilon)$. By taking the square roots of both sides, for $k\geq \max\{k_1,2R\}$, we obtain
    \beq m_k \leq \bigg(1-\frac{R(1-\varepsilon)}{k}\bigg) \| y_1^k - y^*\| \label{bound-mk-final}
    \eeq  
where we used $(1-z)^{1/2} \leq 1 - z/2$ for $z \in [0,1]$ with $z=\frac{2R(1-\varepsilon)}{k}$.
\item [{$\mbox{Step 2}$:}]
\noindent (Bounding $n_k$) Similarly we can write 
   $$\nabla \hf_j (y_j^k)-\nabla \hf_j (y_1^k) = Y_{k,j} (y_j^k - y_1^k)$$ with an averaged Hessian 
    \beq Y_{k,j} = \int_0^1 \nabla^2 \hf_j(y_1^k +\tau (y_j^k-y_1^k)) d\tau \underset{k\to\infty}{\to} \nabla^2 \hf_j(y^*) \preceq \sum_{i=1}^m \nabla^2 \hf_i(y^*)  = \nabla^2 \hf(y^*)=\Idn \label{A-yjk-limit}
    \eeq
as $k \to \infty$ for all $j=1,2,\dots,m$ where we used \eqref{averaged-Dik-converges} in the last equality and the fact that $\nabla^2 f_i (y^*) \succeq 0$ implied by the convexity of $f_i$. Next, we decompose the gradient error term \eqref{eq-grad-error-alg3-bis} into two parts as
$$
  e_y^k = e_{y,1}^k + e_{y,2}^k
$$ 
with
$$ e_{y,1}^k = \sum_{j=1}^m  Y_{k,j} (y_j^k - y_1^k),  \quad e_{y,2}^k =\frac{1}{R} \sum_{j=1}^m  \nabla^2 \hf_j(y_j^k) (y_1^k - y_j^k).
$$
From the triangle inequality for $n_k$ defined in \eqref{dist-iter-in-y}, we have
  \beq n_k  \leq \sum_{\ell=1}^2 n_{k,\ell}  \quad \mbox{with} \quad n_{k,\ell} = \|\bar{D}_k^{-1} e_{y,\ell}^k\|.\label{nk-triangle-bd}
  \eeq
We then estimate $n_{k,\ell}$ for $\ell=1$ and $\ell=2$:
  \beqa
  n_{k,1} &=& \| \sum_{j=1}^m \bar{D}_k^{-1} Y_{k,j} (y_j^k - y_1^k) \| \nonumber \\
 %              &=& \big \| \sum_{j=1}^m \bar{D}_k^{-1} Y_{k,j} \sum_{\ell=1}^{j-1} R (D_\ell^k)^{-1} \nabla \hf_\ell (y_\ell^k) \big \| \nonumber \\
               &=& \frac{R}{k} \bigg\| \sum_{j=1}^m \sum_{\ell=1}^{j-1} \bar{D}_k^{-1} Y_{k,j}  \big(\bar{D}_\ell^k \big)^{-1} \nabla \hf_\ell (y_\ell^k) \bigg\|.  \label{control-grad-error-y-one}
  \eeqa
From \eqref{averaged-Dik-converges} and \eqref{A-yjk-limit}, for every $\ell, j \in \{1,2, \dots,m\}$, each summand above in the last equality satisfies
\beqa \lim_{k \to \infty} \bar{D}_k^{-1} Y_{k,j} \big(\bar{D}_\ell^k \big)^{-1} \nabla \hf_\ell (y_\ell^k)    % = &\leq& \bigg\| \bar{D}_k^{-1}  A_{y,j}^k  \big(\frac{D_\ell^k}{k}\big)^{-1} \nabla_y f_\ell (y_\ell^k) \bigg\|  \nonumber \\
   %&\leq& \big\| \bar{D}_k^{-1}  A_{y,j}^k  (\bar{D}_\ell^k )^{-1}\big\| \big\| \nabla \hf_\ell (y_\ell^k) \big\| \nonumber \\
   =  \nabla^2 \hf_j(y^*)  \nabla \hf_\ell (y^*) 
   %&\leq& G_{*}   \bigg\| \bar{D}_k^{-1}  A_{y,j}^k   (\bar{D}_\ell^k )^{-1} \bigg\| \label{matrix-prod-bound}
  \eeqa
so that  
%where we used the chain rule identity \eqref{change-of-variable} on the gradients in the last step.  
%By global convergence of the incremental Newton method, as $\lim_{k\to\infty} y_j^k \to y^*$ for every $j$ fixed, it follows from the update equation \eqref{eq-hessian-update-bis} that
%    \beqa &&\bar{D}_k = \frac{D_m}{k} \to I, \quad \frac{D_\ell^k}{k} \to \nabla_y^2 f(y^*)=I, \quad  \ell=1,2,\dots,m-1, \label{lim-matrices-1}\\
%        &&A_{y,j}^k \to \nabla_y^2 f_j(y^*) \preceq \nabla_y^2 f(y^*)=I,\label{lim-matrices-2}
%    \eeqa
% and
%From \eqref{averaged-Dik-converges} and \eqref{A-yjk-limit}, we have also the limit       
%%$A_{y,j}^k \to \nabla_y^2 f_j(y^*) \preceq \nabla_y^2 f(y^*)=I$. Therefore for every $\ell, j \in \{1,2, \dots,m\}$
%    \beq  \lim_{k \to \infty} \bigg \| \bar{D}_k^{-1}  A_{y,j}^k  (\bar{D}_\ell^k )^{-1} \bigg\| = \| \nabla^2 \hf_j(y^*) \| \leq \| \nabla^2 \hf(y^*) \|  = 1. \label{lim-matrix-sequence}
%    \eeq
%Then, it follows from \eqref{control-grad-error-y-one},  \eqref{matrix-prod-bound} and  \eqref{lim-matrix-sequence} that 
%  \beqa 
%  \limsup_{k \to \infty} k n_{k,1} %&\leq& R \limsup_{k \to \infty} \sum_{j=1}^m \sum_{\ell=1}^{j-1} \bigg\|  \bar{D}_k^{-1} A_{y,j}^k  \big(\frac{D_\ell^k}{k}\big)^{-1} \nabla_y f_\ell (y_\ell^k) \bigg\|  \nonumber \\
%     %&\leq& R \limsup_{k \to \infty} \sum_{j=1}^m \sum_{\ell=1}^{j-1} M_{*}  \bigg\| \bar{D}_k^{-1}  A_{y,j}^k  \big(\frac{D_\ell^k}{k}\big)^{-1} \bigg \| \nonumber \\
%     &\leq&  R \limsup_{k \to \infty} \sum_{j=1}^m \sum_{\ell=1}^{j-1} G_{*} \leq m(m-1) R G_{*}. \label{limsup-bound-1}
%  \eeqa
\beqas 
   \lim_{k\to\infty}  k n_{k,1} &=& R \bigg\| \sum_{\ell=1}^m \sum_{j=\ell+1}^{m}  \nabla^2 \hf_j(y^*)  \nabla \hf_\ell (y^*) \bigg\| \\
         &\leq&  R \sum_{\ell=1}^m \big\|\sum_{j=\ell+1}^{m} \nabla^2 \hf_j(y^*) \big\| \| \nabla \hf_\ell (y^*)\| \leq  R \sum_{\ell=1}^m \| \nabla^2 \hf(y^*) \| \| \nabla \hf_\ell (y^*)\| \\
         &\leq& R B
\eeqas   
where in the last step we used the fact that $\nabla^2 \hf (y^*) = \Idn$ and the change of variable formula \eqref{change-of-variable} on gradients.
Similarly, 
  \beqa n_{k,2} = \|\bar{D}_k^{-1} e_{y,2}^k \| &=& \frac{1}{R} \bigg \| \sum_{j=1}^m \bar{D}_k^{-1}   \nabla^2 \hf_j(y_j^k) (y_j^k - y_1^k) \bigg\| \nonumber \\
  &=& \frac{1}{k} \bigg\| \sum_{j=1}^m \bar{D}_k^{-1}   \nabla^2 \hf_j(y_j^k)  \sum_{\ell=1}^{j-1} (\bar{D}_\ell^k)^{-1}\nabla \hf_\ell(y_\ell^k) \bigg\| \nonumber%\\
  %&\leq& \frac{1}{k} G_{*}\sum_{j=1}^m \sum_{\ell=1}^{j-1}\bigg\|  \bar{D}_k^{-1}   \nabla^2 \hf_j(y_j^k)(\bar{D}_\ell^k)^{-1}\bigg\|.\label{limsup-nk2}
  \eeqa  
Then, as $\nabla^2 \hf_j(y_j^k) \to \nabla^2 \hf_j(y^*)$, it follows similarly from \eqref{averaged-Dik-converges} that
   \beq \lim_{k\to\infty} k n_{k,2} = \big \| \sum_{j=1}^m\sum_{\ell=1}^{j-1} \nabla^2 \hf(y^*) \nabla \hf(y^*) \big\| \leq B. \label{limsup-bound-2} 
   \eeq  
Going back to the triangle inequality bound \eqref{nk-triangle-bd} on $n_k$, we arrive at
   \beqas \lim_{k \to \infty} k n_k &\leq& \limsup_{k \to \infty} k n_{k,1} + \limsup_{k \to \infty} k n_{k,2} \\ 
       &\leq& RB +  B =  (R+1)B.
   \eeqas
In other words, for any $ \varepsilon>0$, there exists $k_2=k_2(\varepsilon)$ such that   
   \beq n_k \leq (1+\varepsilon)(R+1)B \frac{1}{k}, \quad \forall k\geq k_2(\varepsilon). \label{bound-nk-final}
   \eeq
   
  \item [{$\mbox{Step 3}$:}] (Deriving the rate) Let $\varepsilon \in (0, \frac{R-1}{2R})$ so that $R_\varepsilon:=R(1-\varepsilon) > 1$. Then, it follows from \eqref{dist-iter-in-y}, \eqref{bound-mk-final} and \eqref{bound-nk-final} that for $k\geq \max\{k_1(\varepsilon), 2R, k_2(\varepsilon)\}$, 
 \beqas 
   \|y_1^{k+1} - y^*\|  
        \leq \big(1 - \frac{R_\varepsilon}{k}\big) \| y_1^k - y^*\| +  \frac{(1+\varepsilon) B R(R+1)}{k^2}. %\\
        % &<& (1 - \frac{\bar{R}}{k}) \| y_1^k - y^*\| +  \frac{2m^2 M_{*}R(R+1)}{k^2}.
  \eeqas
Applying Lemma \ref{lem-chung-1} with $u_k = \|y_1^k - y^*\| = \| x_1^{k} - x^*\|_*$, $a=R_\varepsilon > 1$ and $s=1$ leads to 
\beq
   \limsup_{k\to\infty} k \|x_1^k - x_* \|_* \leq \frac{(1+\varepsilon) B R(R+1)}{R(1-\varepsilon)-1}.
\eeq
Letting $\varepsilon\to 0$, completes the proof.
\end{itemize}
\end{proof}

\section{An example in dimension two with $\dist_k = \Omega(1/k^s)$ }\label{sec-appendix-lower-bound}
%\begin{proof} 
The aim is to construct a set of component functions in dimension two such that if IG is applied with stepsize $\alpha_k = \Theta(1/k^s)$ with $0<s\leq 1$, the resulting iterates satisfy $\dist_k \geq \Omega(1/k^s)$. 
%As the order is cyclic, without loss of generality, it suffices to construct an example where the resulting IG iterates satisfy $\| x_j^k - x^* \| = \Omega(1/k^s)$ for some fixed $j\in\{1,2,\dots,m\}$. \footnote{We can always define a new set of functions $(\tilde{f}_1, \tilde{f}_2, \dots \tilde{f}_m) := (f_j, f_{j+1}, \dots, f_m, f_1, \dots, f_{j-1})$. The iterates $\{x_j^k\}$ obtained by minimizing $\sum_i f_i$ are the same as the iterates $\{y_1^k\}$ obtained by minimizing $\sum_i \tilde{f}_i$ with the appropriate initial condition $y_1^1 = x_j^1$. 
%} We will construct such an example with $j=2$.

Consider the following least squares example in dimension two ($n=2$) with $m=8$ quadratics defined as
\beqas
     \tilde{f}_i(x) &=& \frac{1}{2}(c_i^Tx + 1)^2, \quad i=1,2,\dots,8,
\eeqas
where the vectors $c_i \in \R^2$ are 
\beqa    
 c_1 = c_6  &=& - c_2 = -c_5 = [-1, 0]^T ,\label{exam-ci-symmetry-1} \\
 c_3 = c_8 &=& - c_4 = -c_7 = [0, -1]^T \label{exam-ci-symmetry-2} .
\eeqa     
It is easy to check that the sum $\tilde{f}:=\sum_{i=1}^8 \tilde{f}_i$ is strongly convex as 
   \beq \nabla^2 \tilde{f}(x) = \sum_{i=1}^8 c_i c_i^T = 4 \Idn \succ 0.\label{hess-of-quad}
   \eeq 
Starting from an initial point $\tilde{x}_1^1$, the IG method with stepsize $\alpha_k$ leads to the iterations
    \beq \tilde{x}_{i+1}^k = (\Idn - \alpha_k c_i c_i^T)\tilde{x}_i^k - \alpha_k c_i, \quad i=1,2,\dots,8, \label{exam-ig-iters}
    \eeq
which implies 
    \beqa \tilde{x}_1^{k+1} &=&\prod_{i=1}^8 (\Idn - \alpha_k c_i c_i^T) \tilde{x}_1^k - \alpha_k \sum_{i=1}^8 c_i + \alpha_k^2 \sum_{1\leq i < j \leq 8} (c_j^T c_i) c_j + \bigO(\alpha_k^3) \nonumber \\
     \tilde{x}_1^{k+1} &=& \prod_{i=1}^8 (\Idn - \alpha_k c_i c_i^T) \tilde{x}_1^k + \bigO(\alpha_k^3)  \label{eight-quadratics-distances}
    \eeqa
where in the second step we used the fact that the terms with $\alpha_k$ and $\alpha_k^2$ above vanish due to symmetry properties imposed by relations \eqref{exam-ci-symmetry-1} and \eqref{exam-ci-symmetry-2}. The cyclic order $\{1,2,\dots,8\}$ is special in the sense that it takes advantage of the symmetry in the problem leading to cancellations of the $\bigO(\alpha_k)$ and $\bigO\big(\alpha_k^2\big)$ terms leading to smaller $\bigO\big(\alpha_k^3\big)$ additive error terms, whereas it can be checked that this is not the case for the order $\{2,3,\dots,8,1\}$. With this intiution in mind, we next show that the sequence $\{\tilde{x}_2^k\}$ converges to the optimal solution $x^* = 0$ slower than the sequence $\{\tilde{x}_1^k\}$ does. 

Using \eqref{hess-of-quad}, the fact that $x^* = 0$ for this specific example and the triangle inequality on \eqref{eight-quadratics-distances}, 
   \beqas \dist_{k+1} &\leq& \|  \prod_{i=1}^8 \big( \Idn - \alpha_k c_i c_i^T \big) \| \dist_k + \bigO(\alpha_k^3)\\ 
    &\leq& \bigl |1 - 4\alpha_k + \bigO( \alpha_k^2) \bigr | \dist_k + h_3 (\alpha_k)^3
    \eeqas
for some constant $h_3>0$. As $\alpha_k = \Theta (1/k^s)$, applying part $(ii)$ of Lemma \eqref{lem-chung-1} with $t=2s$ gives
     \beq \| \tilde{x}_1^k\| =  \bigO(1/k^{2s}). \label{dist-2s-rate}
     \eeq
Then, for $i=1$ the inner iterations \eqref{exam-ig-iters} gives
    $$ \tilde{x}_2^k = \tilde{x}_1^k - \alpha_k (c_1^T \tilde{x}_1^k + 1)  c_1.$$
As $\tilde{x}_1^k \to 0$, $(c_1^T \tilde{x}_1^k + 1)  c_1 \to c_1$. Then, it follows from \eqref{dist-2s-rate} that $\dist(\tilde{x}_2^k) = \|\tilde{x}_2^k\| = \Theta(\alpha_k) = \Theta(1/k^s)$. As the order is cyclic, if we apply IG to the functions with an alternative order $f_1 = \tilde{f_2}$, $f_2 = \tilde{f_3}$,$\dots$, $f_{m-1} =\tilde{f}_m$ and $f_m = \tilde{f}_1$ instead the resulting iterates are $x_1^k = \tilde{x}_2^k$ satisfying $\dist(x_1^k) = \dist(\tilde{x}_2^k) = \Theta(1/k^s)$. We conclude.
      
\bibliographystyle{plain}
\bibliography{distributed_refs}

\end{document}